\newcommand{\rmd}{{\rm d}}
\newcommand{\bx}{{ \mathbf{x} }}
\newcommand{\by}{\mathbf{y}}
\newcommand{\tby}{\by}
\newcommand{\bFF}{\mathbf{F}}
\newcommand{\bH}{\mathbf{H}}
\newcommand{\bff}{{\mathbf{f}}}
\newcommand{\ent}{{\mathsf e\mathsf n\mathsf t}}
\newcommand{\esc}{{\mathsf e\mathsf s\mathsf c}}
\newcommand{\rnd}{{\mathsf R}}
\newcommand{\dtr}{{\mathsf D}}
\newcommand{\s}{{\mathsf s}}
\newcommand{\rr}{{\mathsf r}}
\newcommand{\syn}{{\mathsf s\mathsf y\mathsf n}}
\definecolor{mygray}{gray}{0.6}
\newcommand{\be}{\begin{equation}}
\newcommand{\ee}{\end{equation}}
\newcommand{\bea}{\begin{eqnarray}}
\newcommand{\eea}{\end{eqnarray}}
\numberwithin{equation}{section}
\newtheorem{theorem}{Theorem}
\numberwithin{theorem}{section}
\newtheorem{proposition}{Proposition}
\numberwithin{proposition}{section}
\newtheorem{corollary}{Corollary}
\numberwithin{corollary}{section}
\newtheorem{lemma}{Lemma}
\numberwithin{lemma}{section}
\theoremstyle{remark}
\theoremstyle{definition}
\newtheorem{definition}{Definition}
\begin{document}

 \title{Statistical determinism in non-Lipschitz dynamical systems}

\date{\today}

\author{Theodore D. Drivas}
%\address{Department of Mathematics, Princeton University, Princeton, NJ 08544\vspace{-2mm}}
\address{Department of Mathematics, Stony Brook University, Stony Brook, NY 11794}
%\email{tdrivas@math.princeton.edu}
\email{tdrivas@math.stonybrook.edu}
\author{Alexei A. Mailybaev}
\address{Instituto de Matem\'{a}tica Pura e Aplicada, Rio de Janeiro, Brazil}
\email{alexei@impa.br}
\author{Artem Raibekas}
\address{Instituto de Matem\'atica e Estat\'istica, Universidade Federal Fluminense, Niter\'oi, Brazil}
\email{artem@mat.uff.br}

\begin{abstract}
We study a class of ordinary differential equations with a non-Lipschitz point singularity, which admit non-unique solutions through this point. As a selection criterion, we introduce stochastic regularizations depending on the parameter $\nu$: the regularized dynamics is globally defined for each $\nu > 0$, and the original singular system is recovered in the limit of vanishing $\nu$.
We prove that this limit yields a \textit{unique statistical solution} independent of regularization, when the deterministic system possesses certain chaotic properties. In this case, solutions become spontaneously stochastic after passing through the singularity: they are selected randomly with an intrinsic probability distribution. 
\end{abstract}

\maketitle
%\tableofcontents

\setlength{\epigraphwidth}{0.81\textwidth}
\setlength{\epigraphrule}{0pt}
\epigraph{\it ``It is proposed that certain formally deterministic fluid systems which possess many
scales of motion are observationally indistinguishable from indeterministic systems;
specifically, that two states of the system differing initially by a small ``observational
error" will evolve into two states differing as greatly as randomly chosen states of the
system within a finite time interval, which cannot be lengthened by reducing the
amplitude of the initial error."}{--- Edward N. Lorenz (1969) %\cite{lorenz1969predictability}
}

\section{Introduction}
\label{sec1}

Consider a nonlinear ordinary differential equation 
\begin{equation}\label{ODEa}
\frac{\rmd \bx}{\rmd t} = \bff(\bx), \quad \bx \in \mathbb{R}^d,
\end{equation}
for arbitrary dimension $d$. Local existence of 
solutions $\bx(t)$ is guaranteed if the function $\mathbf{f}:\mathbb{R}^d \mapsto \mathbb{R}^d$ is continuous, while
the Lipschitz continuity is required for its uniqueness by standard theorems. 
Breaking of the Lipschitz condition is remarkably abundant in dynamical systems modeling natural phenomena; for example, in the $n$-body problem~\cite{diacu1992singularities} or the Kirchhoff--Helmholtz system of point vortices~\cite{newton2013n}, where the forces diverge at vanishing distances. Other important examples arise in fluid dynamics, where particles are transported by shocks in compressible flows \cite{eyink2015spontaneous} or rough velocities in incompressible turbulence~\cite{frisch1995turbulence}. Many infinite-dimensional systems form singularities from smooth data in finite time; these often take the form of H\"{o}lderian cusps~\cite{eggers2015singularities}. %Continuation of such solutions past the singularity is usually non-unique due to the lack of Lipschitz continuity. 

The problem of fundamental importance is: how to select a ``meaningful" solution after the singularity? A natural way to answer this question is to employ a  regularization by which the system is modified (smoothed) very close to the singularity and the solution becomes well-defined at larger times. However this procedure is not robust in general; examples show it can be highly sensitive to the  regularization details~\cite{diperna1989ordinary,ciampa2019smooth2,ciampa2019smooth,drivas2018life} although unique selection is possible in some notable situations \cite{mcgehee1974triple}. In this work, we show that continuation as a stochastic process can accommodate such non-uniqueness in a natural and robust manner if the deterministic system has certain chaotic properties. 

\subsection{Model} 
\label{sec_1_1}
We consider systems (\ref{ODEa}) with the right-hand side of the form
\begin{equation}\label{ODEb}
\bff(\bx) = |\bx|^\alpha \bFF\left(\frac{\bx}{|\bx|}\right), \quad 
\bFF(\by) = \bFF_\s(\by)+F_\rr(\by)\by,
\end{equation}
where 
$
\alpha < 1$
and $\bFF: \mathbb{S}^{d-1}  \mapsto \mathbb{R}^d $ is a $C^1$-function on the unit sphere $\mathbb{S}^{d-1} = \{\by \in \mathbb{R}^d: |\by| = 1\}$ decomposed into the tangential spherical component $\bFF_\s: \mathbb{S}^{d-1}\mapsto T\mathbb{S}^{d-1} $ and the radial component $F_\rr: \mathbb{S}^{d-1} \mapsto \mathbb{R}$. 
The field $\mathbf{f}:\mathbb{R}^d \mapsto \mathbb{R}^d$ defined by \eqref{ODEb} is  continuously differentiable away from the origin. At the origin, it is only  $\alpha$--H\"{o}lder continuous for $\alpha\in (0,1)$, discontinuous for $\alpha = 0$, or divergent if $\alpha < 0$. Solutions of system \eqref{ODEb} with nonzero initial condition $\bx(0) = \bx_0$ may reach the non-Lipschitz singularity in a \textit{finite time}: 
	\begin{equation}\label{ODEb_blowup}
	\lim_{t \nearrow t_b} \bx(t) = \textbf{0}, \quad 0 < t_b < +\infty,
	\end{equation}
after which the solution is generally non-unique. 

System \eqref{ODEb} is invariant under the space-time scaling 
	\begin{equation}\label{ScSym}
	\bx \mapsto \frac{\bx}{\nu},\quad
	t \mapsto \frac{t}{\nu^{1-\alpha}}
	\end{equation}
for any constant $\nu > 0$.
This symmetry reflects, in a simplified form, the fundamental property of scale invariance in multi-scale systems~\cite{eggers2015singularities}, which feature finite-time singularities (often called \textit{blowup}). Thus, models \eqref{ODEb} represent a rather large class of singular dynamical systems that can be seen as a toy model for blowup phenomena. Following this analogy, we call (\ref{ODEb_blowup}) as blowup, interpreting $|\bx|$ as the ``scale'' of solution, and $\by = \bx/|\bx|$ as its scale-invariant (angular) part.  

For the dynamical system approach to models (\ref{ODEb}), we define the auxiliary system  for the variables $\by \in \mathbb{S}^{d-1}$ and $w \in \mathbb{R}^+$ as
	\begin{equation} 
	\label{eq_RenX2}
	\frac{\rmd \by}{\rmd \tau} =  w\bFF_\s(\by),\quad
    	\frac{\rmd w}{\rmd \tau} = w+(\alpha-1)F_\rr(\by)w^2.
	\end{equation}
Systems (\ref{ODEa})--(\ref{ODEb}) and (\ref{eq_RenX2}) are related by the transformation 
    	\begin{equation}
    	\label{eqMainTh1_introB}
	\bx = R_t(\by, w) := \left(\frac{t}{w}\right)^{\frac{1}{1-\alpha}}\by,\quad
	t = e^\tau,
    	\end{equation}
where $R_t: \mathbb{S}^{d-1}\times \mathbb{R}^+ \mapsto \mathbb{R}^d$ is the time-dependent map defined for $t > 0$. Relations (\ref{eqMainTh1_introB}) are motivated by the scaling symmetry (\ref{ScSym}), which becomes the time-translation symmetry $\tau \mapsto \tau+\tau_0$ in the autonomous system (\ref{eq_RenX2}) with the relation $\tau_0 = (\alpha-1)\log\nu$. By changing the time as $\rmd s = w \rmd \tau$, we reduce the first equation in (\ref{eq_RenX2}) to the form
	\begin{equation} 
	\label{yEqn}
	\frac{\rmd \by}{\rmd s} =  \bFF_\s(\by).
	\end{equation} 

It was shown in \cite{drivas2018life} that fixed-point and limit-cycle attractors of system (\ref{yEqn})
impose fundamental restrictions on solutions $\bx(t)$ selected by generic regularization schemes. We now extend these results for chaotic attractors leading to a conceptually different  mechanism: the long-time behavior of system (\ref{yEqn}) expressed in terms of its physical measure will define solutions selected randomly near the non-Lipschitz singularity in system (\ref{ODEa})--(\ref{ODEb}).

\subsection{Assumptions}
\label{sec_1_2}
\subsubsection{(i) On physical measures}

For each attractor $\mathcal{A}\subset \mathbb{S}^{d-1}$ of system (\ref{yEqn}), we denote its basin of attraction\footnote{A compact set $\mathcal{A}$ is an \textit{attractor} with respect to the flow $X^s$ if there exists a compact (\textit{trapping}) region $U$ satisfying $X^s(U)\subset (U)$ for all $s\geq S_0$ ($S_0$ fixed) and $\mathcal{A}=\cap_{s\geq 0} X^s(U)$\cite{Robinson}. The topological basin $\mathcal{B}(\mathcal{A})$ is the set of points that converge to $\mathcal{A}$ under the forward flow. 
%Moreover, we will assume that the attractor is transitive.
} 
by $\mathcal{B}(\mathcal{A})\subset \mathbb{S}^{d-1}$ and introduce the corresponding \textit{domain of attraction} in the full phase space as the cone $\mathcal{D}(\mathcal{A}) = \{r\by: \by \in \mathcal{B}(\mathcal{A}),\ r > 0\} \subset \mathbb{R}^d$.
Denoting by $X^s:\mathbb{S}^{d-1} \mapsto \mathbb{S}^{d-1}$ the flow of system (\ref{yEqn}), we now recall the definition of a physical measure  $\mu_{\mathsf{phys}}$.  Define the basin $\mathcal{B}_{\mu_{\mathsf{phys}}}(\mathcal{A})$
with respect to the measure $\mu_{\mathsf{phys}}$ as being the set of points $\by_0\in\mathcal{B}(\mathcal{A})$ such that
\begin{equation}
    \lim_{s\rightarrow +\infty}\frac{1}{s}\int_0^s{\varphi 
    \big( X^{s_1}(\by_0)\big) \, \rmd s_1} 
    =\int \varphi(\by) \,\rmd \mu_{\mathsf{phys}} (\by),
    \label{SRBEqDef1Copy}
    \end{equation}
holds for all continuous functions $\varphi: \mathcal{B}(\mathcal{A}) \mapsto \mathbb{R}$. Then the measure $\mu_{\mathsf{phys}}$ is \textit{physical} if the basin $\mathcal{B}_{\mu_{\mathsf{phys}}}(\mathcal{A})$ has positive Lebesgue measure.  We will say that the physical measure has a 
\textit{full} basin if $\mathcal{B}_{\mu_{\mathsf{phys}}}(\mathcal{A}_+)$ coincides with  $\mathcal{B}(\mathcal{A}_+)$ Lebesgue almost everywhere. In particular, having a full basin by definition \eqref{SRBEqDef1Copy} implies the uniqueness of the physical measure with respect to the attractor $\mathcal{A}$. Let us observe that ergodic Sinai-Bowen-Ruelle (SRB) measures without zero Lyapunov exponents are also physical measures \cite{Young02}. Hyperbolic attractors\cite{BR} and the Lorenz attractor\cite{AP} give examples of systems having a unique physical (SRB) measure with a full basin.
In our formulation, we assume the existence of 
\begin{enumerate}
\item[(a)] a fixed-point attractor $\mathcal{A}_- = \{\by_-\}$ with the \textit{focusing} property $F_\rr(\by_-) < 0$; 
\vspace{3pt}
\item[(b)] a 
%chaotic 
transitive attractor $\mathcal{A}_+$ having an ergodic physical measure $\mu_{\mathsf{phys}}$ and the \textit{defocusing} property, $F_\rr(\by) > 0$ for any $\by \in \mathcal{A}_+$,
\item[(c)] and that the physical measure $\mu_{\mathsf{phys}}$ has a full basin. 
%$\mathcal{B}_{\mu_{\mathsf{phys}}}(\mathcal{A}_+)$ coincides with  $\mathcal{B}(\mathcal{A}_+)$ Lebesgue almost everywhere. 
\end{enumerate}
%In particular, the last assumption by the definition \eqref{SRBEqDef1Copy} implies the uniqueness of the physical measure with respect to the attractor $\mathcal{A}_+$. In general When this holds, we will say that the physical measure has a 
%\textit{full} basin.

As shown in Section~\ref{sec_DSS}, all solutions of (\ref{ODEa})--(\ref{ODEb}) with initial condition $\bx_0 \in \mathcal{D}(\mathcal{A}_-)$ reach the non-Lipschitz singularity at the origin in finite time. On the contrary, solutions in $\mathcal{D}(\mathcal{A}_+)$ remain nonzero for arbitrarily large times. We note that the chaotic form of $\mathcal{A}_+$ is crucial for our study, while the fixed-point form of $\mathcal{A}_-$ is taken for simplify. System (\ref{yEqn}) may have other attractors in addition to $\mathcal{A}_-$ and $\mathcal{A}_+$, but they will not affect our results. 

\begin{figure}
\centering
\includegraphics[width=1\textwidth]{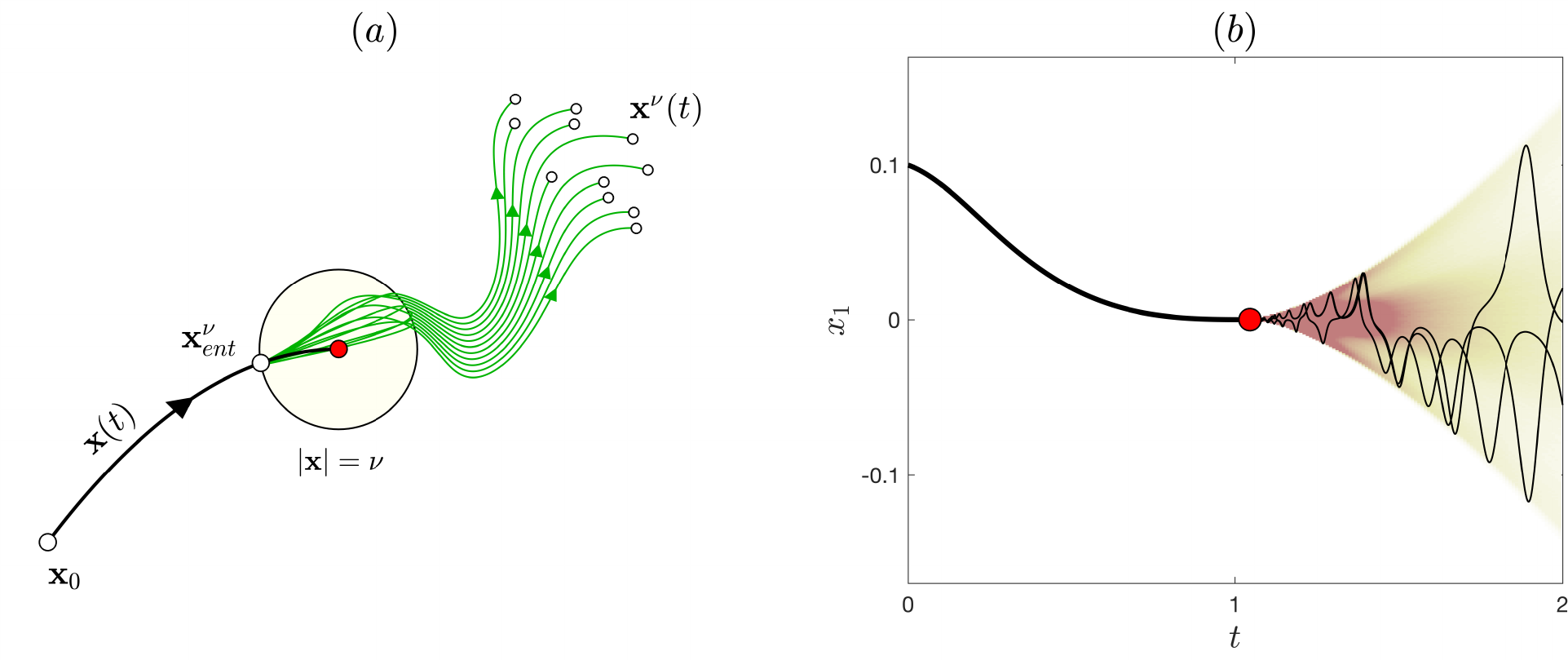}
\caption{(a) Schematic representation of the stochastic regularization procedure in the phase space $\bx \in \mathbb{R}^d$. The solution $\bx(t)$ (the black curve) starts at $\bx_0 = \bx(0)$ and reaches the singularity at $\bx(t_b) = \mathbf{0}$ in finite time. Regularized solutions (thin green curves) are given by dynamical systems smoothed in a small ball $B_\nu$ centered at the singularity. These regularizations are chosen randomly, therefore, the regularized solution is described by a time-dependent  probability measure $\bx^\nu(t) \sim \mu_t^\nu$. (b) Numerical results for the example from Section~\ref{section_Lorenz_example}. Solid lines are random realizations of component $x_1(t)$ in the regularized system with $\nu = 10^{-5}$. Color shows the probability distribution (\ref{eqMainTh1_intro}). Solutions become spontaneously stochastic passing through the non-Lipschitz singularity (red dot).}
\label{fig0}
\end{figure}

The central part of our formulation refers to a class of regularized systems, which are defined by modifying equations (\ref{ODEa})--(\ref{ODEb}) in a small ball $|\bx| < \nu$ as shown schematically in Fig.~\ref{fig0}(a). Unlike usual deterministic regularizations, we assume that our regularization contains a random uncertainty, which is characterized by an absolutely continuous probability measure. We assume certain geometrical properties of this measure related to the attractors $\mathcal{A}_-$ and $\mathcal{A}_+$. The exact definition of such regularizations is given in Section~\ref{sec_DSS} under the name of \textit{stochastic regularization of type} $\mathcal{A}_- \to \mathcal{A}_+$. The regularized system provides a unique measure-valued (stochastic) solution, $\bx^\nu(t)\sim \mu_t^\nu$, where $\mu_t^\nu$ is a probability measure depending on time $t$ and small regularization parameter $\nu$. 

We will prove that the auxiliary system (\ref{eq_RenX2}) has the property of \textit{generalized synchronization}: in the limit $s \to +\infty$, a time-independent asymptotic 
relation exists between the variables as $w = G(\by)$.
 Generalized synchronization originates from applications in nonlinear physics and communication~\cite{kocarev1996generalized}, where the variables $\by$ and $w$ are referred to as a drive and response. In our case, it yields an expression for the physical measure in system (\ref{eq_RenX2}).

\begin{proposition}[Generalized synchronization]
\label{Th1_attr}
The system (\ref{eq_RenX2}) has an attractor 
 	\begin{equation}
	\label{eqAprime}
	\mathcal{A}'_+ = 
	\{ (\by,w):\   
    w = G(\by),\ \by \in \mathcal{A}_+
    \},
    \end{equation}
where $G: \mathcal{A}_+\to \mathbb{R}^+$ is a continuous function given by
	\begin{equation}
	\label{GdefCopy}
	G(\by) = 
	\int_0^{+\infty} \exp\left[(\alpha-1)\int_0^{s_1} 
	F_\rr\left(X^{-s_2}(\by)\right)
	\rmd s_2 \right]\rmd s_1.
	\end{equation}
This attractor has the basin 
\be
\label{eq_Th1_2_basin}
 \mathcal{B}(\mathcal{A}'_+) :=\{ (\by,w) \ : \  (\by,w) \in \mathcal{B}(\mathcal{A}_+) \times\mathbb{R}^+\},
\ee
and a physical measure given by
	\begin{equation} 
	\label{eq_Th1_2}
	\rmd  \mu'_{\mathsf{phys}}(\by,w) = 
	\frac{\delta\big(w-G(\by)\big)}{c\, G(\by)} \,\rmd \mu_{\mathsf{phys}}(\by)\,\rmd w, \quad
	c = \int \frac{\rmd \mu_{\mathsf{phys}}(\by)}{G(\by)},
	\end{equation}
where $\delta$ is the Dirac delta and $c$ is the normalization factor.
\end{proposition}

\subsubsection{(ii) On convergence to equilibrium}
Consider an attractor $\mathcal{A}$ for a flow $X^s$ with a physical measure $\mu_{\mathsf{phys}}$ having a full basin. We will say that the attractor $\mathcal{A}$ has the \textit{convergence to equilibrium} property with respect to the measure $\mu_{\mathsf{phys}}$ when
    	\begin{equation}
    	\lim_{s \rightarrow +\infty}\int {\varphi \circ X^s \, \rmd \mu(\by)}
    	=\int \varphi \,\rmd \mu_{\mathsf{phys}} (\by)
    	\label{SRBEqDef1CE_s}
    	\end{equation}
for all absolutely continuous probability measures $\mu$ supported in the basin $\mathcal{B}(\mathcal{A})$ and all bounded continuous functions $\varphi:  \mathcal{B}(\mathcal{A}) \to \mathbb{R}$.
Notice that condition (\ref{SRBEqDef1CE_s}) refers to statistical averages at large but fixed times, unlike the condition (\ref{SRBEqDef1Copy}) on the physical measure, which is applied to temporal averages along specific solutions. The convergence to equilibrium property is guaranteed, e.g., for hyperbolic flows~\cite{BR}[Theorem 5.3]. Now let $Y^\tau: \mathcal{B}(\mathcal{A}'_+) \mapsto \mathcal{B}(\mathcal{A}'_+)$ be the flow of system (\ref{eq_RenX2}) in the basin $\mathcal{B}(\mathcal{A}'_+)$ given by (\ref{eq_Th1_2_basin}). Our final assumption is that
\begin{enumerate}
\item[(d)] the physical measure $\mu'_{\mathsf{phys}}$ of the attractor $\mathcal{A}'_+$ given by (\ref{eq_Th1_2}) in Proposition \ref{Th1_attr} has the property of convergence to equilibrium.
\end{enumerate}
It is then natural to ask what are the conditions on the vector field on the sphere (\ref{yEqn}), having the attractor $\mathcal{A}_+$ and the physical measure $\mu_{\mathsf{phys}}$, so that the above assumption is satisfied. Certain sufficient conditions are established in Section~\ref{sec_results}, which now summarize. Let us suppose that 
the attractor $\mathcal{A}_+$ of system \eqref{yEqn} with the physical measure $\mu_{\mathsf{phys}}$ satisfies the convergence to equilibrium property.
Consider a closed subset $\mathcal{V} \subset \mathbb{S}^{d-1}$, such that its complement $\mathbb{S}^{d-1}\setminus \mathcal{V}$ contains $\mathcal{A}_-$ and is contained in the interior of $\mathcal{B}(\mathcal{A}_-)$. We will further assume that 
\begin{enumerate}
\item[(i)] there exists a  constant $F_0 > 0$ such that $F_\rr(\by) = F_0$ for any $\by \in \mathcal{V}$, 
\item[(ii)] $\|\nabla \bFF_\s\| < (1-\alpha)F_0$ for the operator norm of the Jacobian matrix and any $\by \in \mathcal{V}$.
\end{enumerate}
In particular, under these hypothesis the results of Section~\ref{sec_results}, Proposition \ref{Prop2} state that the physical measure of the attractor $\mathcal{A}'_+$ in system \eqref{eq_RenX2} also has convergence to equilibrium.
As is discussed in Section~\ref{sec_results}, this permits to conclude the existence of examples satisfying the assumptions (a)--(d).

\subsection{Formulation of the main result}
\label{sec_1_2b}

\begin{theorem} [Spontaneous stochasticity] 
\label{th1_conv}
Given an arbitrary initial condition $\bx_0 \in \mathcal{D}(\mathcal{A}_-)$, there exists a finite time $t_b > 0$ such that the solution $\bx(t)$ of system (\ref{ODEa})--(\ref{ODEb}) is nonzero in the interval $t \in [0,t_b)$ and reaches the singularity $\bx(t_b) = \mathbf{0}$. There exists a measure-valued solution $\mu_t$ for $t > t_b$ with the following properties:
\begin{itemize}
    \item[(i)] For any $t > t_b$, the measure 
        	\begin{equation}
    	\label{eqTh1_1}
	\mu_t = \lim_{\nu \searrow 0}\mu_t^\nu
    	\end{equation}
is a weak limit of the regularization procedure;
    \item[(ii)] The measures $\mu_{t}$ are supported in $\mathcal{D}(\mathcal{A}_+)$ and satisfy the dynamic relation 
    	\begin{equation}
    	\label{eqTh1_2}
	\mu_{t_2} = \left(\mathrm{\Phi}^{t_2-t_1}\right)_*\mu_{t_1}\quad
	\textrm{for any} \quad t_2 > t_1 > t_b,
    	\end{equation}
where the asterisk denotes the pushforward and $\mathrm{\Phi}^t$ is the flow of system \eqref{ODEa}--\eqref{ODEb}. Also, 
    	\begin{equation}
    	\label{eqTh1_3}
	\lim_{t\searrow t_b}\mu_t(\bx) = \delta^d(\bx)
    	\end{equation}
converges to the Dirac measure of the deterministic singular state $\bx(t_b) = \mathbf{0}$.
\end{itemize}
The solution $\mu_t$ is independent of regularization and given explicitly as 
    	\begin{equation}
    	\label{eqMainTh1_intro}
	\mu_t = \left(R_{t-t_b}\right)_* \mu'_{\mathsf{phys}},
    	\end{equation}
with the measure $\mu'_{\mathsf{phys}}$ from (\ref{eq_Th1_2}) and the map $R_t$ introduced for $t > 0$ in (\ref{eqMainTh1_introB}).
\end{theorem}

There are two fundamental implications of Theorem~\ref{th1_conv}. First, it shows that the limit $\nu \searrow 0$ of a stochastically regularized solution exists. This limit yields a stochastic solution for the original singular system (\ref{ODEa})--(\ref{ODEb}): even though the random perturbation formally vanishes in the limit $\nu \searrow 0$, a random path is selected at $t > t_b$; see Fig.~\ref{fig0}(b) demonstrating numerical results from the example presented in Section~\ref{section_Lorenz_example}. Such behavior substantiates the fundamental role of infinitesimal randomness in the regularization procedure of non-Lipschitz systems, and this phenomenon is termed \textit{spontaneous stochasticity}.

The second implication is that the spontaneously stochastic solution is insensitive to a specific choice of the stochastic regularization, within the class of regularizations under consideration. The reason, which is also an underlying idea of the proof, is the following: we show that an interval between $t_b$ and any finite time $t > t_b$ in system (\ref{ODEa})--(\ref{ODEb}) can be represented by an infinitely large time interval for system (\ref{eq_RenX2}) as $\nu \searrow 0$. As a result, a random uncertainty introduced by the infinitesimal regularization  develops into the unique physical measure. This relates the spontaneous stochasticity in our system with chaos or, more specifically, with the convergence to equilibrium property for a chaotic attractor. 

Notice that the case when $\mathcal{A}_+$ is a fixed point is also a special case of our theory. In this case a unique deterministic solution is selected at times $t > t_b$ independently of regularization, as shown previously in \cite{drivas2018life}.

\subsection{Spontaneous stochasticity in models of fluid dynamics}
\label{sec_disc}

Our work provides a class of relatively simple mathematical models, where one can access sophisticated aspects of spontaneous stochasticity: its detailed mechanism, dependence on regularization and robustness. We regard these models as toy descriptions of the spontaneous stochasticity phenomenon in hydrodynamic turbulence, where singularities and small noise are known to play important role~\cite{leith1972predictability,ruelle1979microscopic,eyink1996turbulence}. Below we provide a short survey guiding an interested reader through more sophisticated models from this field.

First, we would like to mention the prediction of Lorenz \cite{lorenz1969predictability} (see the epigraph above), in which he envisioned that the role of uncertainty in multi-scale fluid models may be fundamentally different from usual chaos. Spontaneous stochasticity can be encountered in the Kraichnan model for a passive scalar advected by a H\"{o}lder continuous (non-Lipschitz) Gaussian  velocity~\cite{bernard1998slow}. Here, the statistical solution emerges in a suitable zero-noise limit and describes non-unique particle trajectories~\cite{eijnden2000generalized,le2002integration,le2004flows,drivas2017lagrangian1,kupiainen2003nondeterministic}; see also related studies for one-dimensional vector fields with H\"{o}lder-type singularities~\cite{bafico1982small,attanasio2009zero,trevisan2013zero,flandoli2013topics}. Similar behavior is encountered for particle trajectories in Burgers solutions at points of shock singularities \cite{eyink2015spontaneous} and quantum systems with singular potentials \cite{eyink2015quantum}. 
The uniqueness of statistical solutions has been tested numerically for shell models of turbulence~\cite{mailybaev2016spontaneous,mailybaev2016spontaneously,mailybaev2017toward,biferale2018rayleigh}, and in the dynamics of singular vortex layers~\cite{simon2020}.
We note that the prior work on shell models together with recent numerical studies~\cite{campolina2018chaotic,dallaston2018discrete} demonstrate chaotic behavior near non-Lipschitz singularities, when solutions are represented in renormalized variables and time. This is similar to our model, in which the spontaneous stochasticity is related to chaos in a smooth renormalized dynamical system (\ref{yEqn}). %{\textcolor{red}{In the view of expected genericity of chaotic phenomena in smooth systems, e.g. the Gallavotti--Cohen~\cite{gallavotti1995dynamical}, Gottwald--Melbourne \cite{gottwald2014test} and Palis~\cite{palis2008open} conjectures, we expect that our results are related to spontaneous stochasticity in realistic models of fluid dynamics.}

\subsection{Structure of the paper.} Section~\ref{sec1} contains the introduction and formulation of the main result. Section~\ref{sec_DSS} describes basic properties of solutions and defines the stochastic regularization. Section~\ref{section_Lorenz_example} contains a numerical example inspired by the Lorenz system. Section~\ref{sec_results} provides further developments with the focus on the construction of theoretical examples having robust spontaneous stochasticity. All proofs are collected in Section~\ref{sec_proofs}. 
% The Appendix reviews  the relevant dynamical systems theory.

\section{Definition of regularized solutions}
\label{sec_DSS}
 
First let us show how non-vanishing solutions $\bx(t)$ of the singular system \eqref{ODEa}--\eqref{ODEb} are described in terms of solutions $\by(s)$ for system \eqref{yEqn}.
\begin{proposition}
\label{propSI}
Let $\tby(s)$ solve \eqref{yEqn} for $s \ge 0$ with initial condition $\by(0) = \by_0$ and let 
	\begin{equation} \label{newtime}
	t_b = \lim_{s \to +\infty}t(s),\qquad
	t(s) = \int_{0}^s {r}^{1-\alpha}(s_1)\, \rmd s_1,\qquad 
	r(s) = r_0\exp\int_0^{s} F_\rr(\tby(s_1))\rmd s_1,
	\end{equation}
for any given $r_0 > 0$.  Then, the solution $\bx(t)$ of \eqref{ODEa}--\eqref{ODEb} for $t\in [0,t_b)$ with initial data $\bx(0) = r_0\by_0$ is given by $\bx(t) = r(s(t))\by(s(t))$, where $s:[0,t_b) \mapsto \mathbb{R}^+$ is the inverse of the function $t(s)$ defined in \eqref{newtime}. If $t_b$ is finite, the solution has the blowup property (\ref{ODEb_blowup}).
\end{proposition}

This statement can be checked by the direct substitution into \eqref{ODEa}--\eqref{ODEb}; see \cite{drivas2018life} for details. The next statement, also proved in \cite{drivas2018life}, refers to the focusing and defocusing attractors of system \eqref{yEqn}, $\mathcal{A}_-$ and $\mathcal{A}_+$, which were introduced in Section~\ref{sec_1_2}. 

\begin{proposition}
\label{propSol}
Solutions $\bx(t)$ of system \eqref{ODEa}--\eqref{ODEb} with initial conditions $\bx_0 \in \mathcal{D}(\mathcal{A}_-)$ have the blowup property (\ref{ODEb_blowup}) with $\mathrm{dist}\left(\bx/|\bx|, \mathcal{A}_-\right) \to 0$ as $t \nearrow t_b$. Solutions with $\bx_0 \in \mathcal{D}(\mathcal{A}_+)$ remain in $\mathcal{D}(\mathcal{A}_+)$ at all times $t > 0$  with the monotonously increasing $|\bx|$ and $\mathrm{dist}\left(\bx/|\bx|, \mathcal{A}_+\right) \to 0$ as $t \to +\infty$.
\end{proposition}

Let us illustrate these properties with the two-dimensional example~\cite{drivas2018life} for $\alpha = 1/3$ and
	\begin{equation}
	\bFF(\by) = \left(\begin{array}{c}
    y_1^2+y_1y_2+y_1y_2^2\\
	y_1y_2+y_2^2-y_1^2y_2
	\end{array}\right), \quad
	\bFF_\s(\by) = (y_1y_2^2,-y_1^2y_2), \quad 
	F_\rr(\by) = y_1+y_2,
	\label{eqEx1}
	\end{equation}
where $\by = (y_1,y_2) \in \mathbb{S}^1$ belongs to the unit circle on the plane. Dynamics on a circle of the scale-invariant system \eqref{yEqn} is shown in Fig.~\ref{fig1}(a) and the corresponding solutions of the singular system \eqref{ODEa}--\eqref{ODEb} in Fig.~\ref{fig1}(b). The focusing fixed-point attractor at $(-1,0)$ features blowup solutions, which occupy the corresponding domain $\mathcal{D}(\mathcal{A}_-) = \{(x_1,x_2) \in \mathbb{R}^2: x_1 < 0\}$. There is also a defocusing fixed-point attractor at $(1,0)$. Its domain $\mathcal{D}(\mathcal{A}_+)  = \{(x_1,x_2) \in \mathbb{R}^2: x_1 > 0\}$ contains solutions growing unboundedly at long times. This example demonstrates the strong non-uniqueness for all solutions starting in the left half-plane: they can be extended beyond the singularity in uncountably many ways. 

\begin{figure}[t]
\centering
\includegraphics[width=0.7\textwidth]{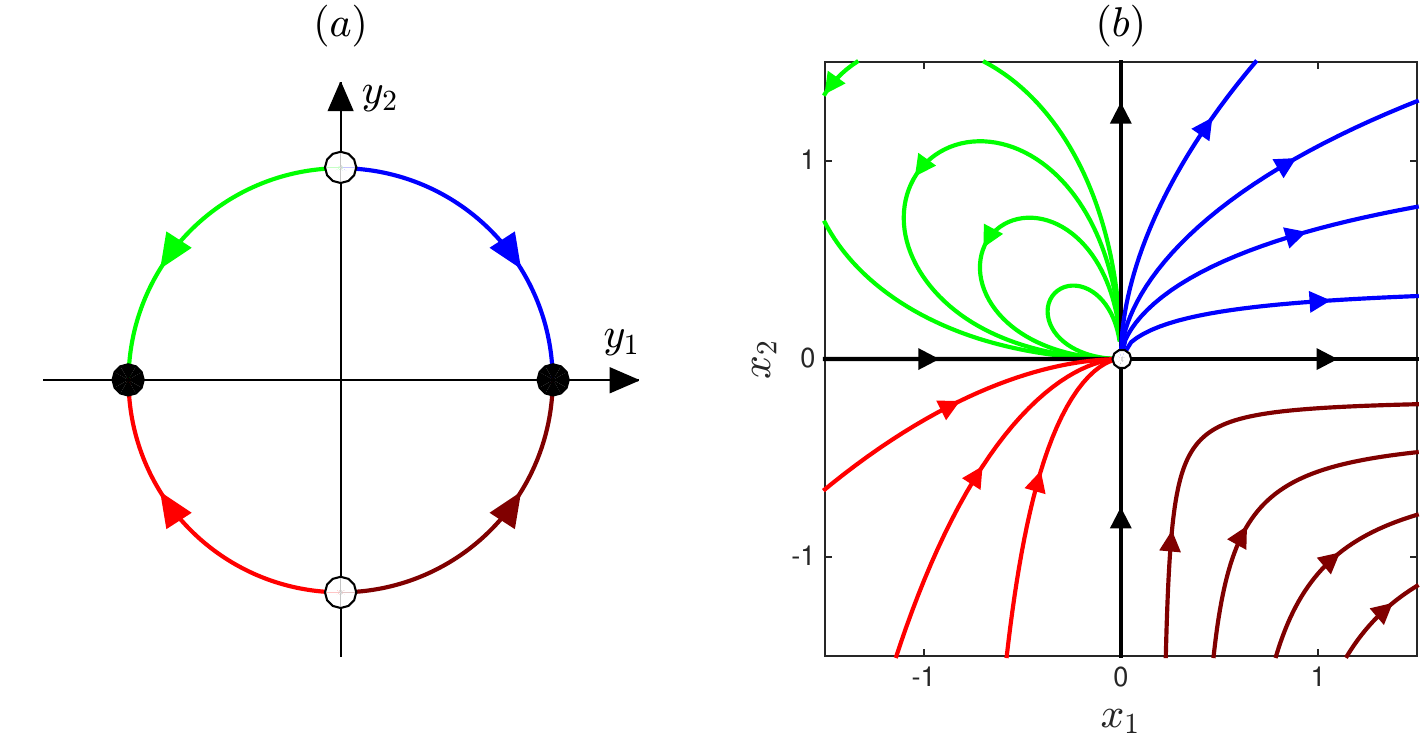}
%(b)\includegraphics[width=0.32\textwidth]{figs/fig4.png}
\hspace{5mm}
\caption{(a) Dynamics of the scale-invariant system \eqref{yEqn} on the unit circle for example (\ref{eqEx1}). There are two attractors (black dots): focusing on the left and defocusing on the right. (b) Solutions of system \eqref{ODEa}--\eqref{ODEb}. Colored curves  correspond to solutions of the same color in the left panel.}
\label{fig1}
\end{figure}

\subsection{Regularized sytem} 
\label{ssec:nuRegA}

%By Kneser's theorem~\cite[\S II.4]{hartman2002ordinary}, whenever solutions of the initial value problem are non-unique, then there are infinitely many of them. 
Let us consider a class of $\nu$-\textit{regularized systems} 
	\begin{align}\label{eqn:epsRegCopy}
	\displaystyle
 	\frac{\rmd \bx}{\rmd t} = \bff^\nu(\bx) ,\quad   \bff^\nu(\bx):= 
 	\begin{cases} 
	\displaystyle
	|\bx|^\alpha\bFF(\bx/|\bx|), & \bx \notin B_\nu,\\[3pt]
	\nu^\alpha \bH(\bx/\nu), & \bx \in B_\nu,
 	\end{cases}
	\end{align}
where $\nu > 0$ is the regularization parameter and $B_\nu = \{\bx \in \mathbb{R}^d: |\bx| \le \nu\}$ is the ball of radius $\nu$; recall that $\alpha < 1$. Here $\bH: B_1 \mapsto \mathbb{R}^d$ is a $C^1$-function in the unit ball, which is chosen so that the resulting regularized field $ \bff^\nu$ is $C^1(\mathbb{R}^d)$. This property does not depend on $\nu$, because the function $\bH$ in system (\ref{eqn:epsRegCopy}) is scaled to match the self-similar form of the original vector field. Note that the described choice of regularization leaves large freedom due to its dependence on the function $\bH$. 
The regularized field $\bff^\nu$ recovers the original singular system \eqref{ODEb} by taking the limit $\nu \searrow 0$. Motivated by the conceptual similarity with the viscous regularization acting at small scales in fluid dynamics~\cite{frisch1995turbulence}, we call $\nu$ the \textit{viscous parameter} and the limit $\nu \searrow 0$ the \textit{inviscid limit}. 
 
The scaling symmetry (\ref{ScSym}) extends to system (\ref{eqn:epsRegCopy}) as
	\begin{equation}\label{ScSymR}
	\bx \mapsto \frac{\bx}{\nu},\quad
	t \mapsto \frac{t}{\nu^{1-\alpha}}, \quad \nu \mapsto 1.
	\end{equation}
Let us denote the flow of the regularized system (\ref{eqn:epsRegCopy}) by $\mathrm{\Phi}^t_\nu: \mathbb{R}^d \mapsto \mathbb{R}^d$; it is uniquely defined for $t \ge 0$, $\nu > 0$ and $\alpha < 1$. Symmetry (\ref{ScSymR})  yields the relation between the regularized flows for arbitrary $\nu > 0$ and $\nu  = 1$ as
	\begin{equation}\label{ScSymRR}
	\mathrm{\Phi}^t_{\nu}(\bx) = \nu\, \mathrm{\Phi}^{t/\nu^{1-\alpha}}_1\left(\frac{\bx}{\nu}\right).
	\end{equation}
Using this map for a deterministically or randomly chosen function $\bH$, we now introduce the two types of regularizations: deterministic and stochastic.

\subsection{Deterministic regularization of type $\mathcal{A}_- \to \mathcal{A}_+$}

\begin{figure}
\centering
\includegraphics[width=0.9\textwidth]{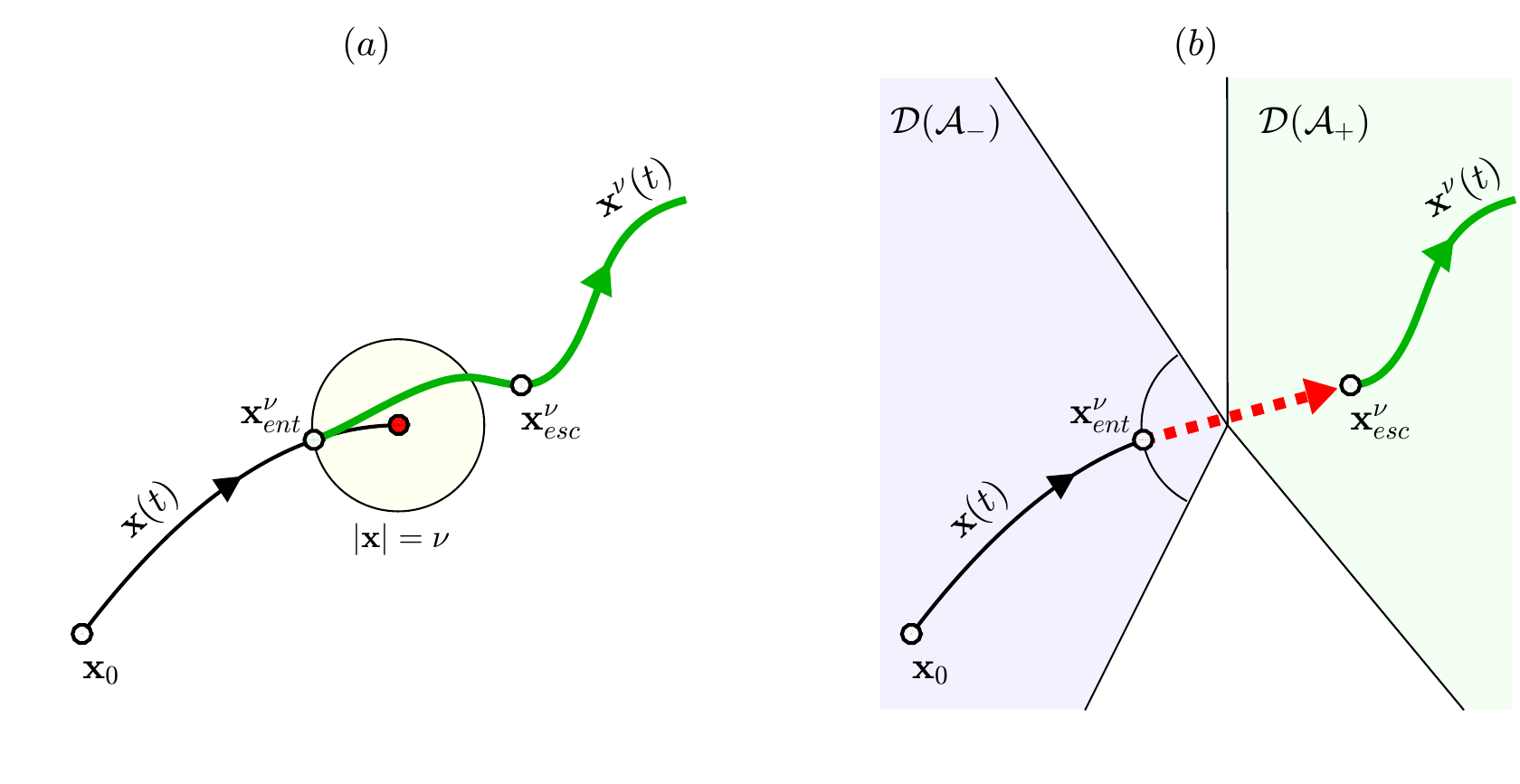}
%\hspace{5mm}
%(b)\includegraphics[width=0.35\textwidth]{figs/fig6.png}
\caption{Schematic representation of the regularization procedure in the phase space $\bx \in \mathbb{R}^d$. (a) The blowup solution $\bx(t)$ (the black curve) starts at $\bx_0 = \bx(0)$ and reaches the singularity at $\bx(t_b) = \mathbf{0}$ in finite time. The regularized solution $\bx^\nu(t)$ (thick green curve) is given by the dynamical system modified in a small ball $B_\nu$ centered at the singularity. The solutions $\bx(t)$ and $\bx^\nu(t)$ coincide until and differ after the point $\bx_\ent^\nu$.
(b) This regularization procedure is formalized by considering the two segments: the original solution $\bx(t)$ until the \textit{entry} point $\bx_\ent^\nu$, and the regularized solution $\bx^\nu(t)$ after the \textit{escape} point $\bx_\esc^\nu$. The two points $\bx_\ent^\nu$ and $\bx_\esc^\nu$ are related via the regularization map $\mathrm{\Psi}_{\dtr}$ represented by the bold dashed arrow. For the regularization of type $\mathcal{A}_- \to \mathcal{A}_+$, the first segment belongs to the domain $\mathcal{D}(\mathcal{A}_-)$, while the second segment belongs to $\mathcal{D}(\mathcal{A}_+)$.}
\label{fig2}
\end{figure}

Consider initial condition  $\bx_0 \in \mathcal{D}(\mathcal{A}_-)$ in the domain of the focusing attractor. The corresponding solution $\bx(t)$ of system \eqref{ODEb} reaches the origin in finite time $t_b$; see Proposition~\ref{propSol}. Let us consider the solution $\bx^\nu(t)$ of regularized system (\ref{eqn:epsRegCopy}) with the same initial condition for a given (small) viscous parameter $\nu > 0$. This solution exists and is unique globally in time. The two solutions $\bx(t)$ and $\bx^\nu(t)$ coincide up until the first time, when the solution enters the  ball $B_\nu$; see Fig.~\ref{fig2}(a). We denote this \textit{entry time} by $t_\ent^\nu$, which has the properties
	\begin{equation}
	\label{eq_invlim}
	t_\ent^{\nu} < t_b,\quad
	\lim_{\nu \searrow 0} t_\ent^{\nu} = t_b.
	\end{equation}
We assume that the regularization is such that the solutions spend only a finite time in the ball $B_\nu$. Then, we can select an \textit{escape time} $t_\esc^\nu > t_\ent^\nu$ such that $\bx^\nu(t) \notin B_\nu$ for $t \ge t_\esc^\nu$. The corresponding \textit{entry} and \textit{escape points} are denoted by 
	\begin{equation}
	\label{eq_reg_sol_ent1}
	\bx_\ent^\nu = \bx(t_\ent^\nu),\quad
	\bx_\esc^\nu = \bx(t_\esc^\nu),
	\end{equation}
and have $|\bx_\ent^\nu| = \nu$ and $|\bx_\esc^\nu| > \nu$; see Fig.~\ref{fig2}(a).
Using (\ref{ScSymRR}), points (\ref{eq_reg_sol_ent1}) are related by
	\begin{equation}
	\label{eq_reg_sol_EscP}
	\frac{\bx^\nu_\esc}{\nu} = \mathrm{\Psi}_{\dtr}
	\left(\frac{\bx^\nu_\ent}{\nu}\right), \quad 
	t^\nu_\esc = t^\nu_\ent+\nu^{1-\alpha} T,
	\end{equation}
for the map $\mathrm{\Psi}_{\dtr} = \mathrm{\Phi}_1^{T}$ and $T > 0$ defined by the second equality. We assume that escape points belong to the domain of a focusing attractor, $\bx_\esc^\nu \in \mathcal{D}(\mathcal{A}_+)$, ensuring that $\bx^\nu(t) \in \mathcal{D}(\mathcal{A}_+)$ at all larger times; see Proposition~\ref{propSol}.

The limit in (\ref{eq_invlim}) implies that $|\bx^\nu_\ent/\nu-\by_-| \to 0$ as $\nu \searrow 0$ for the fixed-point attractor $\mathcal{A}_- = \{\by_-\}$; see Proposition~\ref{propSol}. Hence, for our purposes, it is sufficient to know the map in (\ref{eq_reg_sol_EscP}) for a small neighborhood $\mathcal{U}_- \subset \mathbb{S}^{d-1}$ of $\by_-$. Let us choose a sufficiently small neighborhood and $T > 0$ such that $|\mathrm{\Phi}_1^{T}(\by)| > 1$ for all $\by \in \mathcal{U}_-$. Then, we can define the escape points by relations (\ref{eq_reg_sol_EscP}), where both $T$ and the map $\mathrm{\Psi}_{\dtr} = \mathrm{\Phi}_1^{T}$ do not depend on $\nu$ or initial point $\bx_0$. This yields 

\begin{definition}[Deterministic regularization]
\label{def_attr3Copy}
A \emph{regularization of type $\mathcal{A}_- \to \mathcal{A}_+$} is given by a continuous map 
	\begin{align}
	\label{ODErescaleMapCopy}
	\mathrm{\Psi}_{\dtr}:\ \mathcal{U}_- \mapsto \mathcal{D}(\mathcal{A}_+),
	\end{align}
which is defined in some neighborhood $\mathcal{U}_- \subset \mathbb{S}^{d-1}$ of $\mathcal{A}_-$, and a constant $T > 0$. For $\bx_0 \in \mathcal{D}(\mathcal{A}_-)$, escape points are given by relations (\ref{eq_reg_sol_EscP}) for sufficiently small $\nu > 0$.
\end{definition}

Having the escape point and time, one defines the regularized solution simply as
	\begin{equation}
	\label{eqRR_5}
	\bx^\nu(t) = \mathrm{\Phi}^{t-t_\esc^\nu}\left(\bx^\nu_\esc\right), 
	\quad
	t \ge t^\nu_\esc,
	\end{equation}
where $\mathrm{\Phi}^t$ is the flow of the original singular system \eqref{ODEa}--\eqref{ODEb}. In the limit $\nu \searrow 0$, we will not be interested in the solution inside the vanishing interval $t \in (t^\nu_\ent,\, t^\nu_\esc)$. Therefore, for our purposes, the regularization process is conveniently represented by the single map $\mathrm{\Psi}_{\dtr}$ in the generalized Definition~\ref{def_attr3Copy}. We do not need to explicitly specify a regularizing field $\bH$, which generated this map; see Fig.~\ref{fig2}(b).

\subsection{Stochastic regularization}
\label{subsec_stocreg}

\begin{figure}
\centering
\includegraphics[width=0.5\textwidth]{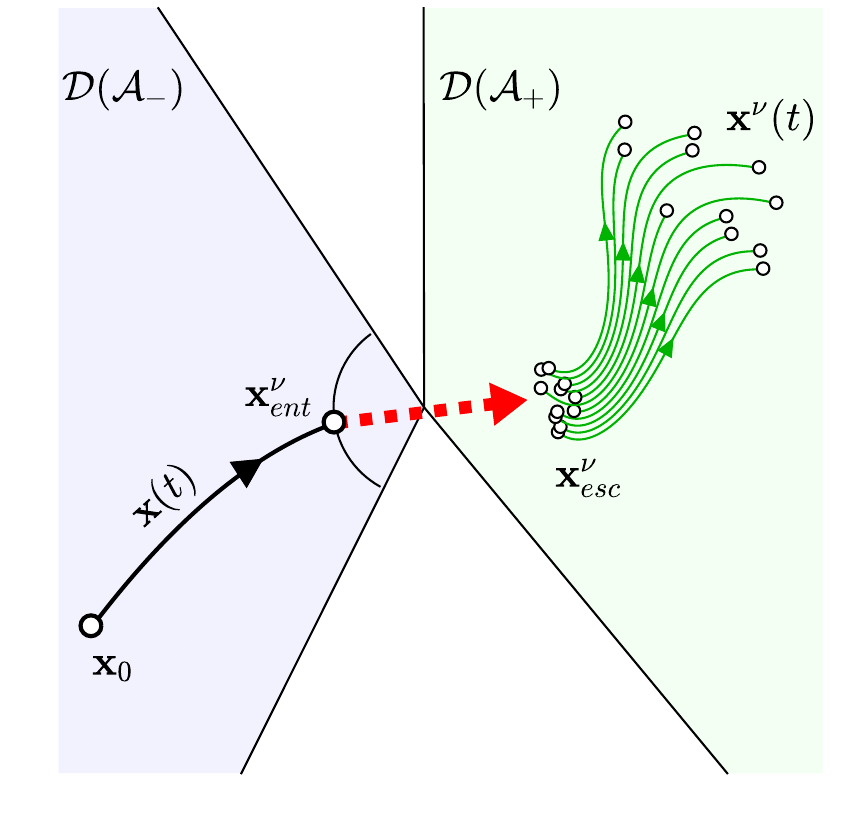}
%\hspace{5mm}
%(b)\includegraphics[width=0.35\textwidth]{figs/fig6.png}
\caption{Schematic representation of the stochastic regularization procedure in the phase space $\bx \in \mathbb{R}^d$. The solution contains the two segments: the original deterministic solution $\bx(t)$ until the entry point $\bx_\ent^\nu$, and the regularized solution $\bx^\nu(t)$ emanating from the random escape point  $\bx_\esc^\nu$. The probability distribution of $\bx_\esc^\nu$ is related to the entry point $\bx_\ent^\nu$ via the regularization map $\mathrm{\Psi}_\rnd$. For the regularization of type $\mathcal{A}_- \to \mathcal{A}_+$, the first segment belongs to the domain $\mathcal{D}(\mathcal{A}_-)$ and the second to $\mathcal{D}(\mathcal{A}_+)$.}
\label{fig3}
\end{figure}

It is known that, in general, solutions $\bx^\nu(t)$ with deterministic regularization do not converge in the inviscid limit $\nu \searrow 0$~\cite{drivas2018life}. The limits may exist along some subsequences $\nu_n \searrow 0$ but need not be unique. We now introduce a different type of regularization by assuming that escape points are known up to some random uncertainty; see Fig.~\ref{fig3}. For this purpose, one may consider a family of regularized systems (\ref{eqn:epsRegCopy}) with the field $\bH$ depending of a vector of parameters, and impose a probability distribution on values of these parameters; see Section~\ref{section_Lorenz_example} for an explicit example. Assuming that the resulting probability distributions are described by absolutely continuous measures, the regularization map (\ref{ODErescaleMapCopy}) is substituted by the function
	\begin{align}
    	\label{eqSR1}
    	\mathrm{\Psi}_\rnd:\ \mathcal{U}_-  \mapsto L^1(\mathcal{D}(\mathcal{A}_+)).
    \end{align}
This map defines a (positive and unit $L^1$ norm) probability density function
	\begin{align}
    	\label{eqSR1b}
	f^\nu_\esc = \mathrm{\Psi}_\rnd\left(\frac{\bx^\nu_\ent}{\nu}\right) 
	\in L^1(\mathcal{D}(\mathcal{A}_+))
    \end{align}
supported in the domain $\mathcal{D}(\mathcal{A}_+)$. Using the scaling (\ref{eq_reg_sol_EscP}), we introduce the probability distribution $\mu^\nu_\esc$ for a random escape point as 
	\begin{align}
    	\label{eqSR1_add2}
	\rmd \mu^\nu_\esc(\bx) = f^\nu_\esc\left(\frac{\bx}{\nu}\right) \frac{\rmd \bx}{\nu},\qquad
	t^\nu_\esc = t^\nu_\ent+\nu^{1-\alpha} T.
    	\end{align}
Summarizing and adding the continuity condition, we propose the following definition.

\begin{definition}[Stochastic regularization]
\label{def_stoch_reg}
A \emph{stochastic regularization of type $\mathcal{A}_- \to \mathcal{A}_+$} is given by a continuous map (\ref{eqSR1}), which is defined in some neighborhood $\mathcal{U}_- \subset \mathbb{S}^{d-1}$ of $\mathcal{A}_-$, and a constant $T > 0$. For $\bx_0 \in \mathcal{D}(\mathcal{A}_-)$, the entry point $\bx^\nu_\ent/\nu \in \mathcal{U}_-$ for sufficiently small $\nu > 0$ and random escape points are given the probability distribution $\mu^\nu_\esc$ defined in (\ref{eqSR1b}) and (\ref{eqSR1_add2}).
\end{definition}
 
We define the measure-valued \textit{stochastically regularized solution} $\bx^\nu(t) \sim \mu^\nu_t$ as 
	\begin{equation}
	\label{eqRR_5st}
	\mu^\nu_t = 
	\left(\mathrm{\Phi}^{t-t_\esc^\nu}\right)_* 
	\mu^\nu_\esc, 
	\quad
	t \ge  t^\nu_\esc,
	\end{equation}
where the asterisk denotes the push-forward of measure $\mu^\nu_\esc$ by the flow $\mathrm{\Phi}^t$ of original singular system \eqref{ODEa}--\eqref{ODEb}. Similarly to (\ref{eqRR_5}), the solution is now defined at all times except for a short interval $(t_\ent^\nu,t_\esc^\nu)$ vanishing as $\nu\searrow 0$. 

Definition~\ref{def_stoch_reg} completes the formulation of our main result in Theorem~\ref{th1_conv}. This theorem states that, when the randomness of regularization is removed in the limit $\nu \searrow 0$, the limiting solution exists. This limit is independent of regularization and intrinsically random (spontaneously stochastic): different solutions are selected randomly at times $t >  t_b$ with the uniquely defined probability distribution.

\section{Spontaneous stochasticity with Lorenz attractor: numerical example}
\label{section_Lorenz_example}

In this section, we design an explicit example of singular system \eqref{ODEb} with the exponent chosen as $\alpha = 1/3$, and observe numerically the spontaneously stochastic behavior. We consider this example for the dimension $d = 4$, which is the lowest dimension allowing chaotic dynamics \eqref{yEqn} on the unit sphere, $\by = (y_0,y_1,y_2,y_3) \in S^3$. The radial field is chosen as $F_\rr(\by) = -y_0$. The tangent vector field $\bFF_\s$ is defined as the interpolation between two specific fields $\bFF_-$ and $\bFF_+$ in the form
	\begin{equation}
	\label{eq_example_1}
	\bFF_\s(\by) = S_1(\xi) \bFF_-(\by) + \left(1-S_1(\xi)\right)\bFF_+(\by),\quad
	\xi = 2y_0-1/2,
	\end{equation}
where $S_1$ the is the smoothstep (the cubic Hermite) interpolation function 
	\begin{equation}
	\label{eq_example_1b}
	S_1(\xi) = \left\{
	\begin{array}{ll}
	0,& \xi \le 0;\\[3pt]
	3\xi^2-2\xi^3,\ & 0 \le \xi \le 1;\\[3pt]
	1, & 1 \le \xi.
	\end{array}
	\right.
	\end{equation}
The function $\bFF_\s$ coincides with $\bFF_-$ in the upper region $y_0 \ge 0.75$ and with $\bFF_+$ in the lower region $y_0 \le 0.25$; see Fig.~\ref{fig_ex1}. We take $\bFF_-(\by) = \mathrm{P}_s(0,-y_1,-2y_2,-3y_3)$, where $\mathrm{P}_s$ is the operator projecting on a tangent space of the unit sphere. This field has the fixed-point attractor $\mathcal{A}_- = \{\by_-\}$ at the ``North Pole'' $\by_- = (1,0,0,0)$, which is the node with eigenvalues $-1$, $-2$ and $-3$. This attractor is focusing because $F_\rr(\by_-) = -1$. 
We choose the field $\bFF_+(\by)$ such that its flow is diffeomorphic to the flow of the Lorenz system 
	\begin{equation}
	\label{eq_example_4}
	\dot x = 10(y-x), \quad \dot y = x\left(28-z\right)-y, \quad \dot z = xy-8z/3
	\end{equation}
by the scaled stereographic projection 
	\begin{equation}
	\label{eq_example_4b}
	x = \frac{40y_1}{1-y_0},\quad
	y = \frac{40y_2}{1-y_0},\quad
	z = 38+\frac{40y_3}{1-y_0}.
	\end{equation}
This projection is designed such that the lower hemisphere, $y_0 < 0$, contains the Lorenz attractor $\mathcal{A}_+$; see Fig.~\ref{fig_ex1}. It is defocusing, because $F_\rr(\by) = -y_0 > 0$. 

\begin{figure}
\centering
\includegraphics[width=0.65\textwidth]{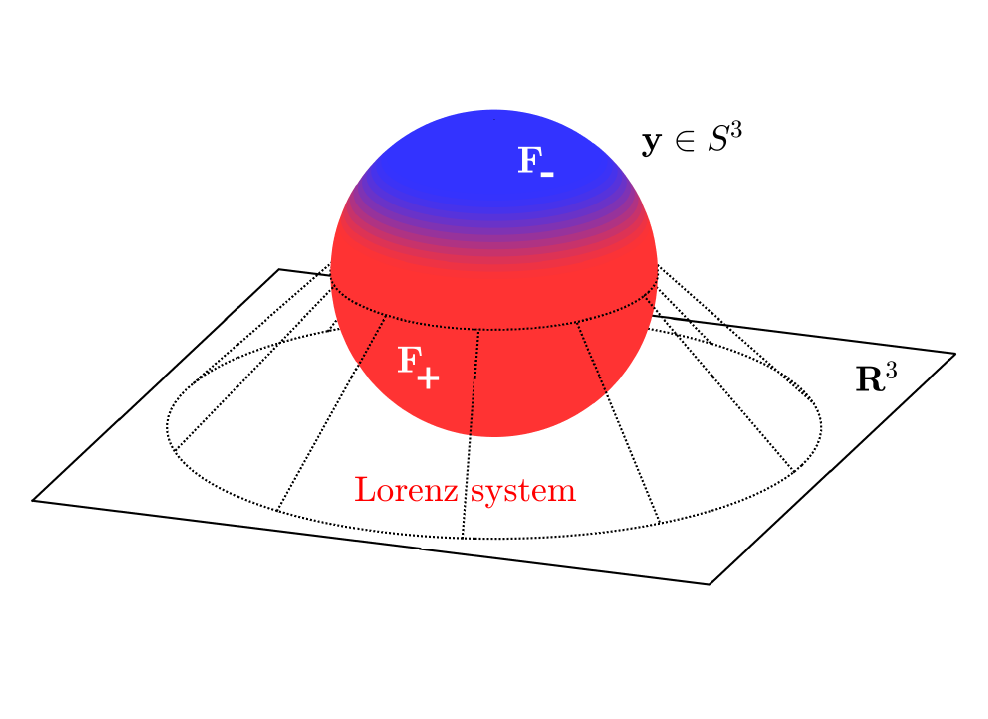}
\vspace{-10mm}
\caption{Schematic structure of the spherical field $\bFF_\s(\by)$ in our example. It is composed of the field $\bFF_-$ in the blue region, which has the fixed-point attractor at the ``North Pole'', and the field $\bFF_+$ in the red region, which is diffeomorphic to the Lorenz system. The fields are patched together using a smooth interpolation.}
\label{fig_ex1}
\end{figure}

In system (\ref{eqn:epsRegCopy}), we use the regularized field
	\begin{equation}
	\label{eq_example_5}
	\bH(\bx) = S_1(\eta)\bH_0
	+\left(1-S_1(\eta)\right)\bff(\bx),\quad
	\eta = 2|\bx|-1/2,
	\end{equation}
which interpolates smoothly between the original singular field $\bff(\bx)$ for $|\bx| \ge 3/4$ and the constant field $\bH_0$ for $|\bx| \le 1/4$. The latter is chosen as $\bH_0 = (X_0,X_1,X_2,X_3-1)$, where $X_i$ are time-independent random numbers uniformly distributed in the interval $[-1/2,1/2]$. We confirmed numerically that such a field induces the stochastic regularization of type  $\mathcal{A}_- \to \mathcal{A}_+$ according to Definition~\ref{def_stoch_reg}. 

It is expected but not known whether the flow of the Lorenz system has the property of convergence to equilibrium, as required in Theorem~\ref{th1_conv}. Therefore, with the present example we verify numerically that the concept of spontaneous stochasticity extends to such systems. We perform high-accuracy numerical simulations of systems \eqref{ODEa}--\eqref{ODEb} and (\ref{eqn:epsRegCopy})  with  the Runge--Kutta fourth-order method. The initial condition is chosen as $\bx_0 = (0.4,0.1,0.2,0.3)$. 
The solution $\bx(t)$ of singular system \eqref{ODEa}--\eqref{ODEb} reaches the origin at $t_b \approx 1.046$ (blowup). Figure~\ref{fig_ex2} shows regularized solutions for three random realizations of the regularized system with the tiny $\nu = 10^{-5}$. One can see that these solutions are distinct at post-blowup times.

\begin{figure}
\centering
\includegraphics[width=1.0\textwidth]{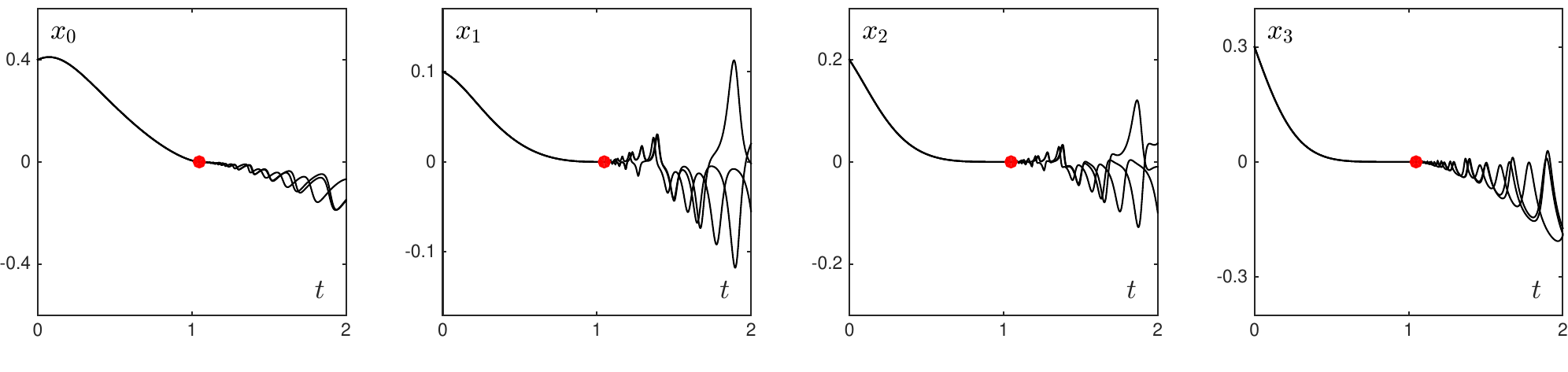}
\caption{Components $(x_0,x_1,x_2,x_3)$ of regularized solutions $\bx^\nu(t)$ for $\nu = 10^{-5}$ for three random choices of vector $\bH_0$ in regularized field (\ref{eq_example_5}). These solutions are different after the blowup time $t_b \approx 1.046$; the blowup point is indicated by the red dot.}
\label{fig_ex2}
\end{figure}

\begin{figure}
\centering
\includegraphics[width=0.65\textwidth]{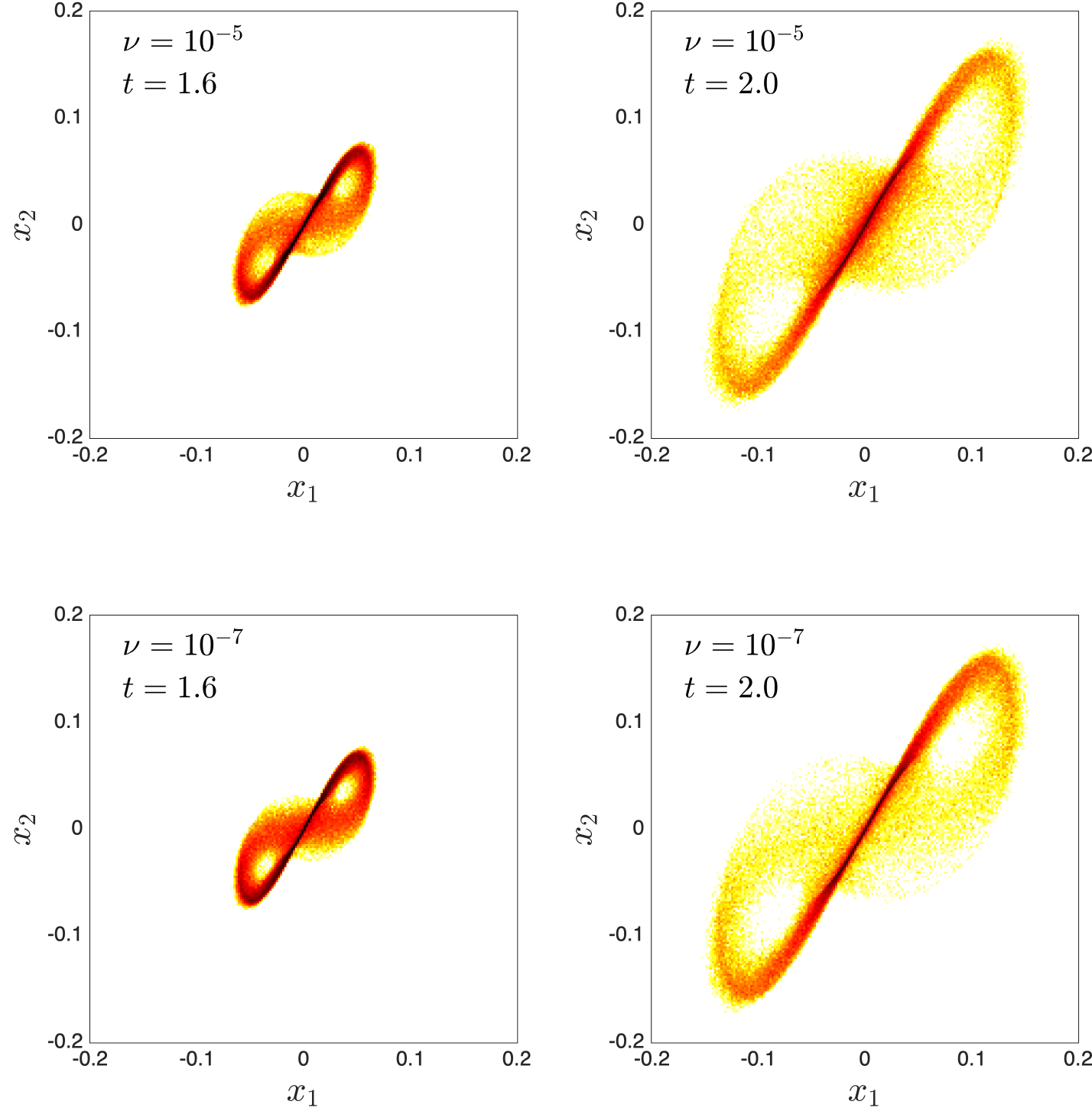}
\caption{Probability density computed numerically at times $t = 1.6$ (left) and $t = 2.0$ (right) using the statistical ensemble of $10^5$ regularized solutions. The darker color indicates the higher density. The first row corresponds to $\nu = 10^{-5}$ and the second row to $\nu = 10^{-7}$, confirming the spontaneous stochasticity in the inviscid limit.}
\label{fig_ex3}
\end{figure}

In order to observe the spontaneous stochasticity, we compute numerically the probability density for the regularized solution projected on the plane $(x_1,x_2)$ at two post-blowup times: $t = 1.6$ and $2.0$. This is done by considering an ensemble of $10^5$ random realizations of the regularized field, and the results are shown in Fig.~\ref{fig_ex3}. Here the magnitude of the probability density is shown by the color: darker regions correspond to larger probabilities. For a better visual effect, the color intensity was taken proportional to the logarithm of probability density. The presented results demonstrate the spontaneously stochastic behavior, because the probability density is almost identical for two very small values of the regularization parameter: $\nu = 10^{-5}$ (first row) and $\nu = 10^{-7}$ (second row). This provides a convincing numerical evidence that the inviscid limit exists and it is spontaneously stochastic.
The probability distributions have similar form at different times up to a proper scaling, in agreement with the self-similar limit (\ref{eqMainTh1_intro}) from Theorem~\ref{th1_conv}; see also Fig.~\ref{fig0}(b). The supplementary video~\cite{suppvideo} shows the evolution of probability density in time.

\section{Robust spontaneous stochasticity }\label{sec_results}

The major difficulty in applications of Theorem~\ref{th1_conv} to specific systems is how to verify the assumption of convergence to equilibrium (\ref{SRBEqDef1CE_s}), which is formulated for the attractor $\mathcal{A}'_+$ from Proposition~\ref{Th1_attr}. In this section, we discuss how specific and robust examples of systems satisfying this assumption can be constructed. 

Recall that system \eqref{yEqn}  must have a fixed point attractor $\mathcal{A}_-$. Let us choose a closed subset $\mathcal{V} \subset \mathbb{S}^{d-1}$, such that its complement $\mathbb{S}^{d-1}\setminus \mathcal{V}$ contains $\mathcal{A}_-$ and is contained in the interior of $\mathcal{B}(\mathcal{A}_-)$. The subset $\mathcal{V}$ contains basins of all the other attractors, in particular, $\mathcal{B}(\mathcal{A}_+) \subset \mathcal{V}$. It is convenient to use a diffeomorphism $h: \mathcal{V} \mapsto \hat{\mathcal{V}}$, which maps to a closed subset $\hat{\mathcal{V}} \subset \mathbb{R}^{d-1}$ and defines the new variable $\hat{\by} = h(\by)$. One can verify that systems \eqref{eq_RenX2} and \eqref{yEqn} keep the same form in terms of $\hat{\by}$, if we substitute $\bFF_\s$ and $F_\rr$ by the conjugated vector field $\hat{\bFF}_\s:\hat{\mathcal{V}} \mapsto \mathbb{R}^{d-1}$ and $\hat{F_\rr} = F_\rr \circ h^{-1}: \hat{\mathcal{V}} \mapsto \mathbb{R}$.
%of conjugated flow. 
For simplicity, we will omit the hats in the notation below, therefore, assuming in all the relations that $\by \in \mathcal{V} \subset \mathbb{R}^{d-1}$. Although $\mathcal{V}$ is not forward invariant by the flow, this will not be necessary in what follows.

Consider now the attractor $\mathcal{A}_+$ of system \eqref{yEqn} with the physical measure $\mu_{{\mathsf{phys}}}$. Let us assume that it satisfies the \textit{convergence to equilibrium} property \eqref{SRBEqDef1CE_s}.

% Let us assume that $\bFF_\s$ and $F_\rr$ are $C^k$-functions for a given integer $k \ge 1$, and consider function $\tilde{\bFF}_\s$ and $\tilde{F}_\rr$ (denoted with tildes), which are $\varepsilon$-close to $\bFF_\s$ and $F_\rr$ in the $C^k$-topology. We refer to systems \eqref{eq_RenX2} and \eqref{yEqn} with these functions as $\varepsilon$-perturbed systems.

\begin{definition} 
\label{defrobust}
% Consider a $C^k$-function $\bFF_\s$ for a given integer $k \ge 1$. 
%We assume that system \eqref{yEqn} has an attractor $\mathcal{A}_+$ with the SRB measure and the convergence to equilibrium property. 
We say that the convergence to equilibrium property is $C^k$-robust, if there exists $\varepsilon > 0$ and a closed neighborhood $\mathcal{U}$ of the attractor, $\mathcal{A}_+ \subset \mathcal{U} \subset \mathcal{B}(\mathcal{A}_+)$, such that the following holds: for any $\varepsilon$-perturbation of $\bFF_\s$ 
%smaller than $\varepsilon$ 
in the $C^k$-topology, the corresponding system \eqref{yEqn} has an attractor contained in $\mathcal{U}$ having a physical measure and the convergence to equilibrium property.
\end{definition}

This definition extends naturally from the angular dynamics \eqref{yEqn} to the full auxiliary system \eqref{eq_RenX2} by considering perturbations of both $\bFF_\s$ and $F_\rr$. 
The following proposition provides  criterion that can be used for satisfying condition (\ref{SRBEqDef1CE_s}) in specific examples.

\begin{proposition}  \label{Prop2}
Let us assume that the attractor $\mathcal{A}_+$ in system \eqref{yEqn} has convergence to equilibrium and there exists a  constant $F_0 > 0$ such that $F_\rr(\by) = F_0$ for any $\by \in \mathcal{V}$. 

(i) If, for any $\by \in \mathcal{V}$,
    \be\label{conditionF0}
	\|\nabla \bFF_\s\| < (1-\alpha)F_0,
	\ee 
where $\|\nabla \bFF_\s\|$ is the operator norm of the Jacobian matrix $\nabla \bFF_\s$ at the point $\by$, then the attractor $\mathcal{A}'_+$ in system \eqref{eq_RenX2} has convergence to equilibrium.

(ii) If the convergence to equilibrium of $\mathcal{A}_+$ is $C^k$-robust and, for any $\by \in \mathcal{V}$,
\be\label{conditionF0b}
	\|\nabla \bFF_\s\| < \frac{(1-\alpha)F_0}{k},
	\ee
then the attractor $\mathcal{A}'_+$ has $C^k$-robust convergence to equilibrium.
\end{proposition}

% The consequence of Proposition~\ref{Prop2} is the robustness of spontaneous stochasticity in Theorem~\ref{th1_conv}. Here we must also assume that the fixed-point attractor $\mathcal{A}_-$ is hyperbolic, i.e., it persists under small perturbations of the system. We formulate this result as

%\begin{corollary}[Robust spontaneous stochasticity]
%\label{rob_stoc}
%Let us assume the conditions of Theorem~\ref{th1_conv} with the hyperbolic fixed-point attractor $\mathcal{A}_-$, and the conditions of Proposition~\ref{Prop2} in the $C^k$-robust case. Then the spontaneous stochasticity of system \eqref{ODEa}--\eqref{ODEb} is $C^k$-robust, i.e., conditions of Theorem~\ref{th1_conv} are valid under sufficiently small perturbations of the functions $\bFF_\s$ and $F_\rr$ in the $C^k$-topology.
%\end{corollary}

Notice that conditions (\ref{conditionF0}) and (\ref{conditionF0b}) of Proposition~\ref{Prop2} can always be satisfied by a proper choice of the function $F_\rr$. This
suggests a constructive way for designing specific systems \eqref{ODEa}--\eqref{ODEb} having 
%with the 
spontaneous stochasticity. For system to have $C^k$-robust spontaneous stochasticity, one should also impose that the fixed-point attractor $\mathcal{A}_-$ is hyperbolic, i.e., it persists under small perturbations of the system. 

%The above corollary suggests a possible way for constructing examples of systems with (robust) spontaneous stochasticity, where conditions (\ref{conditionF0}) can be satisfied by a proper choice of the function $F_\rr$. 
Since the crucial hypothesis in this construction is that the attractor $\mathcal{A}_+$ has ($C^k$-robust) convergence to equilibrium, let us discuss examples of attractors having this property. The classical results on the ergodic theory of hyperbolic flows show that a $C^2$-hyperbolic attractor satisfying the C-dense condition of Bowen-Ruelle (density of the stable manifold of some orbit) has $C^2$-robust convergence to equilibrium, see \cite{BR}[Theorem 5.3].  In the last decades, many statistical properties have been studied for the larger class of singular hyperbolic attractors, which includes the  hyperbolic and the Lorenz attractors; see for example \cite{AP} as a basic reference and \cite{APPV09, AM16, AM19, AT20} for more recent advances. Robust convergence to equilibrium was naturally conjectured for such attractors \cite[problem E.4]{BDV}. Although the general proof is not available yet, recently in \cite[see Corollary B and Section 4]{AT20} were given examples of singular hyperbolic attractors having robust convergence to equilibrium, which include perturbations of the Lorenz attractor. In particular, it was shown there exists an arbitrary small $C^2$-perturbation of the Lorenz attractor so that the resulting system has $C^2$-robust convergence to equilibrium with respect to $C^1$-observables. %Thus, one may assume that $\mathcal{A}_+$ comes from the above cited examples and has robust convergence to equilibrium.

Having in mind the above discussion, as a consequence of Proposition~\ref{Prop2} we obtain% the following.
\begin{corollary}\label{rob_stoc}
There exist examples exhibiting $C^2$-robust spontaneous stochasticity.
\end{corollary}

\section{Proofs}
\label{sec_proofs}

The central idea of the proofs is to reduce post-blowup dynamics of the stochastically regularized equations to the evolution of system (\ref{eq_RenX2}) over a time interval, which tends to infinity in 
the inviscid limit $\nu\searrow 0$. In this way, the inviscid limit is linked to the attractor and physical measure of system (\ref{eq_RenX2}). 

For the analysis of equations (\ref{eq_RenX2}), we transform them to a unidirectionally coupled dynamical system, whose decoupled part is the scale-invariant equation \eqref{yEqn}. Let us introduce the new temporal variable
	\begin{equation}
  	\label{eq_proof1d}
  	s(\tau) = \int_0^\tau w(\tau_1) \rmd \tau_1.
  	\end{equation}
Then,  system (\ref{eq_RenX2}) reduces to the so-called master-slave configuration 
	\begin{align}
	\label{eqU3firstCopy}
	\frac{\rmd \by}{\rmd s} & =  \bFF_\s(\by),\\[3pt] 
    	\frac{\rmd w}{\rmd s} &= 1+(\alpha-1)F_\rr(\by)w,
	\label{eqU3secondCopy}
	\end{align}
where the functions $\by(s)$ and $w(s)$ are written in terms of the new temporal variable $s$.
Note that the right-hand side of (\ref{eqU3secondCopy}) is unity for $w = 0$, which prevents $w(s)$ from changing the sign. Hence, $s$ in (\ref{eq_proof1d}) is a monotonously increasing function of $\tau$. Since $\bFF_\s$ and $F_\rr$ are bounded functions, solutions of system (\ref{eqU3firstCopy}) and (\ref{eqU3secondCopy}) are defined globally in time $s$. 

Notice that the new temporal variable (\ref{eq_proof1d}) is solution-dependent. This is a minor problem for the analysis of physical measures, which are related to temporal averages (\ref{SRBEqDef1Copy}). However, this is a serious obstacle for the property of convergence to equilibrium, which is associated with the ensemble average (\ref{SRBEqDef1CE_s}) at a fixed time.

\subsection{Proof of Proposition~\ref{Th1_attr}}

By the assumptions, system (\ref{eqU3firstCopy}) has the attractor $\mathcal{A}_+$. Therefore, we need to understand the dynamics of the second equation (\ref{eqU3secondCopy}). We are going to prove that this equation has the property of generalized synchronization between $\by$ and $w$. Let $X^s: \mathbb{S}^{d-1} \mapsto \mathbb{S}^{d-1}$ denote the flow of system (\ref{eqU3firstCopy}) and the pair ($X^s $, $X_w^s$) with $X_w^s: \mathbb{S}^{d-1} \times \mathbb{R}^+ \mapsto \mathbb{R}^+$ denote the flow of the system (\ref{eqU3firstCopy})--(\ref{eqU3secondCopy}). The formal definition of the generalized synchronization is the  asymptotic large-time relation~\cite{kocarev1996generalized}
	\begin{equation}
	\label{eq_proof5}
	w = G(\by) 
	\end{equation}
for $\by \in \mathcal{A}_+$ and
	\begin{equation}
	\label{eq_proof6}
	G(\by) = \lim_{s \to +\infty}X_{w}^s(X^{-s}(\by),w_0),
	\end{equation}
where the limit must be independent of an arbitrarily fixed $w_0 > 0$. These expressions yield the limiting value of $w(s)$ as $s \to +\infty$ under the condition that $\by(s) = \by$ is fixed, i.e., for the initial point $\by_0 = X^{-s}(\by)$. The generalized synchronization implies that $w(s)$ gets synchronized with the evolution of $\by(s)$. Our next step is to prove that the generalized synchronization occurs in our system with the continuous function $G(\by)$ given by (\ref{GdefCopy}).

The function $F_\rr:\mathbb{S}^{d-1}\to \mathbb{R}$ is continuous and therefore has an upper bound, $F_\rr(\by) < F_M$. Recall that the attractor $\mathcal{A}_+$ is a compact set with the defocusing property, $F_\rr(\by) > 0$ for any $\by \in \mathcal{A}_+$. Hence, we can choose a trapping neighborhood $\mathcal{U}_+$ of $\mathcal{A}_+$ and a positive constant $F_m$ such that 
	\begin{equation}
	\label{eq_proof_prop7}
	0 < F_m < F_\rr(\by) < F_M \quad  \textrm{for} \quad \by \in \mathcal{U}_+. 
	\end{equation}
We define the two quantities
	\begin{equation}
	\label{eq_WsmM}
	w_m = \frac{1}{(1-\alpha)F_M} > 0, \quad
	w_M = \frac{1}{(1-\alpha)F_m} > w_m.
	\end{equation}
For any $\by \in \mathcal{U}_+$, the derivative in (\ref{eqU3secondCopy}) satisfies the inequalities $dw/ds > 0$ for $0 < w \le w_m$ and $dw/ds < 0$ for $w \ge w_M$. Thus, the region 
	\begin{equation}
	\label{eq_proof_prop8}
	\mathcal{U}'_+ = \{ (\by,w): \ \by \in \mathcal{U}_+, \ w \in (w_m,w_M) \}
	\end{equation}
is trapping for system (\ref{eqU3firstCopy})--(\ref{eqU3secondCopy}), and it attracts any solution starting in $\mathcal{B}(\mathcal{A}_+) \times\mathbb{R}^+$. 

\begin{lemma}
\label{lemma1a}
The function 	\begin{equation}
	\label{GdefCopy_for_proof}
	G(\by) = 
	\int_0^{+\infty} \exp\left[(\alpha-1)\int_0^{s_1} 
	F_\rr\left(X^{-s_2}(\by)\right)
	\rmd s_2 \right]\rmd s_1
	\end{equation}
is continuous on the attractor $\mathcal{A}_+$.
\end{lemma}

\begin{proof}
Convergence of the integral in (\ref{GdefCopy_for_proof}) follows from the existence of positive lower bound $F_m$ in (\ref{eq_proof_prop7}) and the condition $\alpha < 1$. Now, let $\varepsilon > 0$. We are going to show that there exists $\delta > 0$, such that 
	\begin{equation}
	\label{eq_proof_lemma1}
	|G(\by')-G(\by)| < \varepsilon
	\end{equation}
for any $\by$ and $\by' \in \mathcal{A}_+$ with $|\by'-\by| < \delta$.
We split the integral in (\ref{GdefCopy_for_proof}) into two segments for $s_1 \in [0,s_p]$ and $s_1 \in [s_p,+\infty)$ with an arbitrary parameter $s_p > 0$. This yields 
\begin{equation}
\label{eq_P1proofBnC}
G(\by) = G_{s_p}(\by)+R_{s_p}(\by),
\end{equation}
where 
    \begin{equation}
    \label{eq_P1proofBn}
    G_{s_p}(\by) = \int_0^{s_p} \exp\left[(\alpha-1)
    \int^{s_1}_0 
    F_\rr\left(X^{-s_2}(\by)\right)
    \rmd s_2\right]\rmd s_1,
    \end{equation}
    \begin{equation}
    \label{eq_P1proofCadd}
    R_{s_p}(\by)
    = \int_{s_p}^{+\infty} 
    \exp\left[(\alpha-1)\int^{s_1}_0 
    F_\rr\left(X^{-s_2}(\by)\right)
    \rmd s_2\right]\rmd s_1.
    \end{equation}
Positive function (\ref{eq_P1proofCadd}) can be bounded using the property $F_\rr(\by) > F_m > 0$ from (\ref{eq_proof_prop7}) as
	\begin{equation}
	\label{eq_P1proofD}
	R_{s_p}(\by) < \int^{+\infty}_{s_p} 
	\exp\left[(\alpha-1)F_m s_1\right]\rmd s_1 
	= \frac{\exp\left[(\alpha-1)F_m s_p\right]}{(1-\alpha)F_m}.
	\end{equation}
By choosing 
    \begin{equation}
    \label{eq_P1proofE2}
    s_p > \frac{\log\left[(1-\alpha)
    F_m\varepsilon/4\right]}{(\alpha-1)F_m},
    \end{equation}
we have
    \begin{equation}
    \label{eq_P1proofE}
    R_{s_p}(\by) < \frac{\varepsilon}{4}.
    \end{equation}
This bound is valid for any $\by \in \mathcal{A}_+$. Using it in (\ref{eq_proof_lemma1}) with expression (\ref{eq_P1proofBnC}), we have
    \begin{equation}
    \label{eq_P1proofC}
    \big| G(\by')-G(\by) \big|
    < \big| G_{s_p}(\by')-G_{s_p}(\by)\big| +\frac{\varepsilon}{2}.
    \end{equation}
The function $G_{s_p}(\by)$ in (\ref{eq_P1proofBn}) contains integrations within finite intervals and, therefore, it is a continuous function defined for any $\by \in \mathbb{S}^{d-1} $. One can choose $\delta > 0$ such that $\big| G_{s_p}(\by')-G_{s_p}(\by)\big| < \varepsilon/2$ for any $\by$ and $\by' \in \mathbb{S}^{d-1} $ with $|\by'-\by| < \delta$. This yields the desired property (\ref{eq_proof_lemma1}) as the consequence of (\ref{eq_P1proofC}). 
\end{proof}

\begin{lemma}
\label{lemma2}
Function (\ref{eq_proof6}) takes the form (\ref{GdefCopy_for_proof})
for any $\by \in \mathcal{A}_+$ and $w_0 > 0$. Convergence of the limit (\ref{eq_proof6}) is uniform in the region
	\begin{equation}
	\label{eq_proof_prop9}
	\by \in \mathcal{A}_+, \quad w_0 \in (w_m,w_M).
	\end{equation}
For any solution $\by(s)$ of equation (\ref{eqU3firstCopy}) belonging to the attractor $\mathcal{A}_+$, the function $w(s) = G(\by(s))$ solves equation (\ref{eqU3secondCopy}).
\end{lemma}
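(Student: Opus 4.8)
The plan is to solve the linear equation \eqref{eqU3secondCopy} along trajectories explicitly and then take the time limit $s\to+\infty$. First I would observe that, for a fixed orbit $\by(u)=\mathrm{\Phi}^u(\by_0)$ of \eqref{eqU3firstCopy}, equation \eqref{eqU3secondCopy} is a linear inhomogeneous ODE in $w$, so the integrating-factor (variation of constants) formula gives
\[
\mathrm{\Phi}_w^s(\by_0,w_0) = w_0\,\exp\!\left[(\alpha-1)\!\int_0^s F_r(\by(u))\,du\right] + \int_0^s \exp\!\left[(\alpha-1)\!\int_r^s F_r(\by(u))\,du\right] dr .
\]
Substituting $\by_0=\mathrm{\Phi}^{-s}(\by)$, so that $\by(u)=\mathrm{\Phi}^{u-s}(\by)$, and changing variables $v=s-u$ in the inner integrals and $s_1=s-r$ in the outer one turns both terms into ``backward-time'' integrals along $\mathrm{\Phi}^{-\cdot}(\by)$: the particular part becomes exactly the truncation of \eqref{GdefCopy} to $s_1\in[0,s]$, while the homogeneous part becomes $w_0\exp[(\alpha-1)\int_0^s F_r(\mathrm{\Phi}^{-v}(\by))\,dv]$.

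Next I would estimate the two pieces. Because $\mathcal{A}_+$ is flow-invariant, $\mathrm{\Phi}^{-v}(\by)\in\mathcal{A}_+\subset\mathcal{U}_+$ for all $v\ge0$, so the lower bound $F_r>F_m>0$ from \eqref{eq_proof_prop7} holds along the whole backward orbit. Hence the homogeneous term is bounded by $w_0\,e^{(\alpha-1)F_m s}\to0$, for every $w_0>0$, and uniformly once $w_0\le w_M$; and the difference between the particular term and $G(\by)$ is the tail $\int_s^{+\infty}\exp[(\alpha-1)\int_0^{s_1}F_r(\mathrm{\Phi}^{-s_2}(\by))\,ds_2]\,ds_1 \le e^{(\alpha-1)F_m s}/((1-\alpha)F_m)$, the same estimate as \eqref{eq_P1proofD}, which tends to $0$ uniformly in $\by\in\mathcal{A}_+$. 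Combining these two bounds yields the limit \eqref{eq_proof6} with value $G(\by)$, together with its uniformity on the region \eqref{eq_proof_prop9}.

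For the last assertion — that $w(s)=G(\by(s))$ solves \eqref{eqU3secondCopy} along an orbit $\by(s)\subset\mathcal{A}_+$ — I would use $\mathrm{\Phi}^{-s_2}(\by(s))=\by(s-s_2)$ together with the substitution $\sigma=s-s_1$ to bring \eqref{GdefCopy} into the form $G(\by(s))=\int_{-\infty}^{s}\exp\!\big[(\alpha-1)\int_\sigma^s F_r(\by(u))\,du\big]\,d\sigma$, the integral converging at $\sigma\to-\infty$ since $\int_\sigma^s F_r(\by(u))\,du\ge F_m(s-\sigma)$ by invariance of $\mathcal{A}_+$. Writing $A(s)=\int_0^s F_r(\by(u))\,du$ and factoring $e^{(\alpha-1)A(s)}$ out of the integral, differentiating in $s$ (Leibniz rule) produces the boundary term $e^{(\alpha-1)A(s)}e^{(1-\alpha)A(s)}=1$ and the bulk term $(\alpha-1)F_r(\by(s))\,G(\by(s))$, i.e. precisely the right-hand side of \eqref{eqU3secondCopy}.

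The computations are elementary; the only places needing real care — and the closest thing to an obstacle — are the bookkeeping of the two nested changes of variables (tracking which time direction each integral runs in, so that the truncated expression lines up with \eqref{GdefCopy}) and the systematic use of flow-invariance of $\mathcal{A}_+$ to make $F_r\ge F_m$ hold along the entire backward orbit, which is exactly what upgrades pointwise to uniform convergence in $\by$. The differentiation under the integral in the last step must also be justified, but the domination $e^{(1-\alpha)A(\sigma)}\le e^{-(1-\alpha)F_m(s-\sigma)}$, locally uniform in $s$, supplies the required majorant. Continuity of $G$ itself is already available from Lemma~\ref{lemma1a}.
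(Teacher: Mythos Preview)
Your proposal is correct and follows essentially the same route as the paper: write the explicit variation-of-constants solution of \eqref{eqU3secondCopy}, substitute $\by_0=\mathrm{\Phi}^{-s}(\by)$, and use the uniform lower bound $F_r>F_m$ on $\mathcal{A}_+$ to send $s\to+\infty$ uniformly. The only notable difference is in the last assertion: the paper verifies that $w(s)=G(\by(s))$ by plugging $w_0=G(\by_0)$ into the solution formula and reducing the resulting sum of two integrals to \eqref{GdefCopy} via a change of variables, whereas you rewrite $G(\by(s))$ as $\int_{-\infty}^s e^{(\alpha-1)(A(s)-A(\sigma))}\,d\sigma$ and differentiate directly --- your route is slightly more streamlined but equivalent in content.
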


\begin{proof}
Let us verify that equation (\ref{eqU3secondCopy}) has the explicit solution in the form
	\begin{equation}
	\label{eq_P1proofA}
	\begin{array}{rcl}
	w(s)  =  X_{w}^s(\by_0,w_0) 
	&=& 
	\displaystyle
	w_0\exp\left[(\alpha-1)\int^{s}_0 
	F_\rr\left(X^{s-s_2}(\by_0)\right)
	\rmd s_2\right]
	\\[12pt]
	&& 
	\displaystyle
	+\int^{s}_0 \exp\left[(\alpha-1)\int^{s_1}_0 F_\rr
	\left(X^{s-s_2}(\by_0)\right)
	\rmd s_2\right]\rmd s_1.
	\end{array}
	\end{equation}
It is easy to see that $w(0) = w_0$. The change of integration variable $\tilde{s}_2 = s_2-s$ yields
	\begin{equation}
	\label{eq_P1proofA_extra}
	\frac{\rmd }{\rmd s}
	\int^{s}_0 
	F_\rr\left(X^{s-s_2}(\by_0)\right) \rmd s_2 
	= F_\rr\left(X^{s}(\by_0)\right),\quad
	\end{equation}
	\begin{equation}
	\label{eq_P1proofA_extra2}
	\frac{\rmd }{\rmd s}
	\int^{s_1}_0 
	F_\rr\left(X^{s-s_2}(\by_0)\right) \rmd s_2 
	= F_\rr\left(X^{s}(\by_0)\right)-F_\rr\left(X^{s-s_1}(\by_0)\right).
	\end{equation}
Taking the derivative of (\ref{eq_P1proofA}) and using (\ref{eq_P1proofA_extra})--(\ref{eq_P1proofA_extra2}), we have
	\begin{equation}
	\label{eq_P1proofAder}
	\begin{array}{rcl}
	\displaystyle
	\frac{\rmd w}{\rmd s}
	&=& 
	\displaystyle
	w_0(\alpha-1)
	F_\rr\left(X^{s}(\by_0)\right)
	\exp\left[(\alpha-1)\int^{s}_0 
	F_\rr\left(X^{s-s_2}(\by_0)\right)
	\rmd s_2\right]
	\\[12pt]
	&& 
	\displaystyle
	+\,\exp\left[(\alpha-1)\int^{s}_0 F_\rr
	\left(X^{s-s_2}(\by_0)\right)
	\rmd s_2\right]
	\\[12pt]
	&& 
	\displaystyle
	+\,(\alpha-1)F_\rr\left(X^{s}(\by_0)\right)
	\int^{s}_0 
	\exp\left[(\alpha-1)\int^{s_1}_0 F_\rr
	\left(X^{s-s_2}(\by_0)\right)
	\rmd s_2\right]\rmd s_1
	\\[12pt]
	&& 
	\displaystyle
	-\,(\alpha-1)
	\int^{s}_0 
	F_\rr\left(X^{s-s_1}(\by_0)\right)
	\exp\left[(\alpha-1)\int^{s_1}_0 F_\rr
	\left(X^{s-s_2}(\by_0)\right)
	\rmd s_2\right]\rmd s_1.
	\end{array}
	\end{equation}
The term is the last line is integrated explicitly with respect to $s_1$ as
	\begin{equation}
	\label{eq_P1proofAder2}
	-\left.\exp\left[(\alpha-1)\int^{s_1}_0 F_\rr
	\left(X^{s-s_2}(\by_0)\right)
	\rmd s_2\right]\right|_{s_1 = 0}^{s_1 = s}= 
	1-\exp\left[(\alpha-1)\int^{s}_0 F_\rr
	\left(X^{s-s_2}(\by_0)\right)
	\rmd s_2\right].
	\end{equation}
Combining expressions (\ref{eq_P1proofA}), (\ref{eq_P1proofAder}) and (\ref{eq_P1proofAder2}) with $\by(s) = X^{s}(\by_0)$, one verifies that equation (\ref{eqU3secondCopy}) is indeed satisfied.

Note that $X^s(\by_0) \in \mathcal{A}_+$ for any $s \in \mathbb{R}$ and initial point on the attractor, $\by_0 \in \mathcal{A}_+$. Because of the positive lower bound $F_m$ in (\ref{eq_proof_prop7}) and $\alpha < 1$, the first term in the right-hand side of (\ref{eq_P1proofA}) vanishes in the limit $s \to +\infty$ uniformly for all initial points $\by_0 \in \mathcal{A}_+$ and $w_0 \in (w_m,w_M)$. For the same reason, the limit $s \to +\infty$ of the last term in (\ref{eq_P1proofA}) converges uniformly in this region. Therefore, taking the limit $s \to +\infty$ in (\ref{eq_P1proofA}) with $\by_0 = X^{-s}(\by)$ yields the equivalence of relations (\ref{GdefCopy_for_proof}) and (\ref{eq_proof6}).

Finally, consider solution (\ref{eq_P1proofA}) with $w_0 = G(\by_0)$ given by (\ref{GdefCopy_for_proof}). This yields 
	\begin{equation}
	\label{eq_P1proofF}
	\begin{array}{rcl}
	w(s)  
	&=& 
	\displaystyle
	\int_0^{+\infty} \exp\left[(\alpha-1)\int_{-s}^{s_1} 
	F_\rr\left(X^{-s_2}(\by_0)\right)
	\rmd s_2\right]\rmd s_1
	\\[12pt]
	&& 
	\displaystyle
	+\int^{s}_0 \exp\left[(\alpha-1)\int^{s_1}_0 F_\rr
	\left(X^{s-s_2}(\by_0)\right)
	\rmd s_2\right]\rmd s_1,
	\end{array}
	\end{equation}
where we combined the product of two exponents in the first term into the single one. After changing the integration variables $s_1 = s'_1-s$ and $s_2 = s'_2-s$ in the first integral term of (\ref{eq_P1proofF}), the full expression reduces to the simple form
	\begin{equation}
	\label{eq_P1proofFcomb}
	w(s) =
	\int_0^{+\infty} \exp\left[(\alpha-1)\int_{0}^{s_1} 
	F_\rr\left(X^{s-s_2}(\by_0)\right)
	\rmd s_2\right]\rmd s_1 = G(\by(s)),
	\end{equation}
where $G(\by)$ is given by formula (\ref{GdefCopy_for_proof}) and $\by(s) = X^s(\by_0)$.
Combining with the properties of the solution $w(s)$ described earlier, this proves the lemma.
\end{proof}

Lemma~\ref{lemma2} shows that $\mathcal{A}'_+$ from (\ref{eqAprime}) is the invariant set for system (\ref{eqU3firstCopy})--(\ref{eqU3secondCopy}). This set has the same structure of orbits as the attractor $\mathcal{A}_+$ of system (\ref{eqU3firstCopy}). We need to show that $\mathcal{A}'_+$ is an attractor with the trapping neighborhood (\ref{eq_proof_prop8}). Since $\mathcal{A}_+$ is the attractor of the first equation (\ref{eqU3firstCopy}), it is sufficient to prove that
	\begin{equation}
	\lim_{s \to +\infty} \big|w(s)-G(\tby(s))\big| = 0
	\label{eq_P1proofX1}
	\end{equation}
uniformly for all initial conditions $\by_0 \in \mathcal{A}_+$ and $w_0 \in (w_m,w_M)$. Since $\tby(s) = X^s(\by_0)$ and $w(s) = X_{w}^s(\by_0,w_0)$, we rewrite (\ref{eq_P1proofX1}) as
	\begin{equation}
	\lim_{s \to +\infty} \left|X_{w}^s(X^{-s}\big(\tby(s)\big),w_0)-G(\tby(s))\right| = 0.
	\label{eq_P1proofX1f}
	\end{equation}
The uniform convergence in this expression follows from Lemma~\ref{lemma2}.

It remains to prove the relations
	\begin{equation} 
	\label{eq_Th1_2_for_proof}
	\rmd  \mu'_{\mathsf{phys}}(\by,w) = 
	\frac{\delta\big(w-G(\by)\big)}{c\, G(\by)} \,\rmd \mu_{\mathsf{phys}}(\by)\,\rmd w, \quad
	c = \int \frac{\rmd \mu_{\mathsf{phys}}(\by)}{G(\by)}.
	\end{equation}
Because of the synchronization condition (\ref{eq_proof5}), the physical measure $\mu_{\syn}$ for the attractor $\mathcal{A}'_+$ of system (\ref{eqU3firstCopy})--(\ref{eqU3secondCopy}) is obtained from the physical measure $\mu_{\mathsf{phys}}$ of attractor $\mathcal{A}_+$ as
	\begin{equation}
	\rmd \mu_{\syn}(\by,w) = \delta\big(w-G(\by)\big)\,\rmd \mu_{\mathsf{phys}}(\by)\,\rmd w.
	\label{eq_P1proof9}
	\end{equation}
This measure corresponds to the dynamics of system  (\ref{eqU3firstCopy})--(\ref{eqU3secondCopy}). The time change $\rmd s = G(\by)\rmd \tau$ following from (\ref{eq_proof1d}) with $w = G(\by)$ transforms (\ref{eq_P1proof9}) to the physical measure (\ref{eq_Th1_2_for_proof}) for system (\ref{eq_RenX2}); see \cite[Ch.~10]{cornfeld2012ergodic}.

\subsection{Proof of Theorem~\ref{th1_conv}}

Let us consider the variables 
	\begin{equation}
  	\label{eq_A_w_Shifted}
  	w = (t-t^\nu)|\bx|^{\alpha-1},\quad
  	\tau = \log(t-t^\nu),
  	\end{equation}
where the temporal shift $t^\nu$, specified later in expression (\ref{eq_prTh1_B5}), depends on the regularization parameter $\nu > 0$.
Observe that $t^\nu$ was not present in the original definition (\ref{eqMainTh1_introB}), but it does not affect system (\ref{eq_RenX2}): at times $t > t^\nu$, each non-vanishing solution $\bx(t)$ of \eqref{ODEa}--\eqref{ODEb} is uniquely related to the solution $\by(\tau),~w(\tau)$ of system (\ref{eq_RenX2}) through the relations
    \begin{equation}
    \label{eq_Rmap_Shifted}
  	\bx = R_{t-t^\nu}(\by,w),\quad
	t = t^\nu+e^\tau.
    \end{equation}

Consider arbitrary times $t_2 > t_1 > t^\nu$ and denote
  	\begin{equation}\label{eq_prTh1_B2}
	\begin{array}{c}
	\bx_i = \bx(t_i), \quad
	\by_i = \by(t_i), \quad 
	w_i = w(t_i), \quad 
	\tau_i = \log(t_i-t^\nu),\quad i = 1,2.
	\end{array}
   	\end{equation}
Recalling that $\mathrm{\Phi}^t$ and $Y^\tau$ denote the flows of systems \eqref{ODEa}--\eqref{ODEb} and (\ref{eq_RenX2}), one has
  	\begin{equation}\label{eq_prTh1_B0}
	\bx_2 = \Phi^{t_2-t_1}\left(\bx_1\right),
	\quad t_2 > t_1 > t^\nu,
   	\end{equation}
and
  	\begin{equation}\label{eq_prTh1_B0b}
	(\by_2,w_2) = Y^{\tau_2-\tau_1}(\by_1,w_1),
	\quad \tau_2 > \tau_1.
   	\end{equation}
The first expression in (\ref{eq_Rmap_Shifted}) yields
  	\begin{equation}\label{eq_prTh1_B2prev}
	\bx_1 = R_{t_1-t^\nu}(\by_1,w_1), \quad
	\bx_2 = R_{t_2-t^\nu}(\by_2,w_2).
   	\end{equation}
Equalities (\ref{eq_prTh1_B0})--(\ref{eq_prTh1_B2prev}) provide the conjugation relation between the flows as
  	\begin{equation}\label{eq_prTh1_B3}
	\mathrm{\Phi}^{t_2-t_1} 
	= R_{t_2-t^\nu} \circ Y^{\tau_2-\tau_1} \circ R^{-1}_{t_1-t^\nu}.
   	\end{equation}
where $(\by,w) = R^{-1}_t(\bx)$ is the inverse map.

Let us apply relations (\ref{eq_prTh1_B2}) and (\ref{eq_prTh1_B3}) for the stochastically regularized solution given by (\ref{eqSR1_add2}) and (\ref{eqRR_5st}). We take 
  	\begin{equation}\label{eq_prTh1_B5}
	t_2 = t, \quad
	t_1 = t_\esc^\nu, \quad
	t^\nu = t_\esc^\nu-\nu^{1-\alpha} = t_\ent^\nu+(T-1)\nu^{1-\alpha}
   	\end{equation}
for any given time $t > t_b$. Notice that $t_2 > t_1$ for sufficiently small $\nu > 0$. Then, we use (\ref{eq_prTh1_B3}) to rewrite expression (\ref{eqRR_5st}) in the form of three successive measure pushforwards as
	\begin{equation}
	\label{eq_prTh1_B6}
	\mu^\nu_t(\bx) = 
	\left(\mathrm{\Phi}^{t_2-t_1}\right)_* 
	\mu^\nu_\esc(\bx)
	= \left(R_{t_2-t^\nu}\right)_* \left(Y^{\tau_2-\tau_1}\right)_* 
	\left(R^{-1}_{t_1-t^\nu}\right)_*
	\mu^\nu_\esc(\bx).
	\end{equation}
For the first pushforward, expressions (\ref{eq_prTh1_B5}) yield
	\begin{equation}
	\label{eq_prTh1_B6y}
	\left(R^{-1}_{t_1-t^\nu}\right)_* \mu^\nu_\esc(\bx)
	= \left(R^{-1}_{\nu^{1-\alpha}}\right)_* \mu^\nu_\esc(\bx). 
	\end{equation}
Notice from \eqref{eqMainTh1_introB} that $R^{-1}_{\nu^{1-\alpha}}(\bx) = R^{-1}_1(\bx/\nu)$. Thus, applying expressions (\ref{eqSR1_add2}), we reduce (\ref{eq_prTh1_B6y}) to the form
	\begin{equation}
	\label{eq_prTh1_B6z}
	\left(R^{-1}_{t_1-t^\nu}\right)_* \mu^\nu_\esc(\bx)
	= \left(R^{-1}_1\right)_* \mu^\nu_f(\bx), \quad
	\rmd \mu_f^\nu(\bx) = f^\nu_\esc(\bx)\rmd \bx,
	\end{equation}
where $\mu^\nu_f$ denotes the absolutely continuous probability measure with the density $f^\nu_\esc$. Finally, using expressions (\ref{eq_prTh1_B2}), (\ref{eq_prTh1_B5}) and (\ref{eq_prTh1_B6z}) in (\ref{eq_prTh1_B6}), yields
	\begin{equation}
	\label{eq_prTh1_B6x}
	\mu^\nu_t(\bx) 
	= 
	\left(R_{t-t^\nu}\right)_* 
	\left(Y^{\tau^\nu}\right)_* \left(R^{-1}_1\right)_*
	\mu^\nu_f(\bx)
	\end{equation}
with 
  	\begin{equation}\label{eq_prTh1_B7}
	\tau^\nu = \tau_2-\tau_1 = \log \frac{t-t_\ent^\nu-(T-1)\nu^{1-\alpha}}{\nu^{1-\alpha}}.
   	\end{equation}
In the inviscid limit, from relations (\ref{eq_invlim}), (\ref{eq_prTh1_B5}) and (\ref{eq_prTh1_B7}) one has
  	\begin{equation}\label{eq_prTh1_B7x}
	\lim_{\nu \searrow 0} t^\nu = t_b,\quad
	\lim_{\nu \searrow 0} \tau^\nu = +\infty.
   	\end{equation}

It remains to take the limit $\nu \searrow 0$ in (\ref{eq_prTh1_B6x}). The convergence of entry times from (\ref{eq_invlim}) and Proposition~\ref{propSol} yield
	\begin{equation}
	\label{eq_prTh1_10}
	\lim_{\nu \searrow 0} \by_\ent^\nu = \by_-,
	\end{equation}
where $\mathcal{A}_- = \{\by_-\}$ denotes the fixed-point attractor and $\by_\ent^\nu = \bx_\ent^\nu/\nu$ correspond to entry points.
Since the map $\mathrm{\Psi}_\rnd$ in (\ref{eqSR1}) is continuous, the limit (\ref{eq_prTh1_10}) implies
  	\begin{equation}\label{eq_prTh1_6f}
	f_\esc^\nu \xrightarrow{L^1} f_-
	\quad \textrm{as} \quad \nu \searrow 0,
  	\end{equation}
where
	\begin{equation}
	\label{eq_prTh1_6g}
	f_\esc^\nu = \mathrm{\Psi}_\rnd (\by_\ent^\nu),\quad
	f_- = \mathrm{\Psi}_\rnd (\by_-).
	\end{equation}
Using this limiting function, we rewrite (\ref{eq_prTh1_B6x}) as
	\begin{equation}
	\label{eq_prTh1_B11}
	\mu^\nu_t(\bx) 
	=
	\left(R_{t-t^\nu}\right)_* \left[
	\left(Y^{\tau^\nu}\right)_* \left(R^{-1}_1\right)_* \mu_-(\bx)+
	\left(Y^{\tau^\nu}\right)_* \left(R^{-1}_1\right)_* \Delta\mu_f^\nu(\bx)
	\right],
	\end{equation}
where we introduced the probability measure $\rmd \mu_-(\bx) = f_-(\bx)\rmd \bx$ and the signed measure  for the difference $\Delta\mu_f^\nu(\bx) = \mu_f^\nu(\bx)-\mu_-(\bx)$. Now we can take the inviscid limit $\nu \searrow 0$ for the expression in square parentheses of equation (\ref{eq_prTh1_B11}), where the times of pushforwards behave as (\ref{eq_prTh1_B7x}). Since the measure $\left(R^{-1}_1\right)_* \mu_-(\bx)$ does not depend on $\nu$, the first term in square parentheses converges to $\mu'_{\mathsf{phys}}$ by the convergence to equilibrium property. The remaining term vanishes in the limit $\nu \searrow 0$, because the flow conserves the $L^1$ norm of the density function, and this norm vanishes by the property (\ref{eq_prTh1_6f}). This yields the limit (\ref{eqTh1_1}) with the measure (\ref{eqMainTh1_intro}). Properties (\ref{eqTh1_2}) and (\ref{eqTh1_3}) follow directly from the definitions (\ref{eqRR_5st}) and (\ref{eqMainTh1_introB}).

\subsection{Proof of Proposition \ref{Prop2}} 

We will formulate the proof for the first part of proposition, such that it can be extended later for $C^k$-perturbed systems.

System \eqref{eq_RenX2} considered for $(\by,w) \in \mathcal{V} \times \mathbb{R}^+$ with $F_\rr(\by) \equiv F_0$ takes the form
	\begin{equation}
	\label{eqU3Uadd1}
	\frac{\rmd \by}{\rmd \tau} =  w\bFF_\s(\by),\qquad
	\frac{\rmd w}{\rmd \tau} = w+(\alpha-1)F_{0}w^2.
	\end{equation}
The second equation in (\ref{eqU3Uadd1}) has the fixed point attractor $w = W_0 := \left[(1-\alpha)F_{0}\right]^{-1} > 0$ with the basin $w > 0$. 
Recall that $\mathcal{B}(\mathcal{A}_+) \subset \mathcal{V} \subset \mathbb{R}^{d-1}$. 
Expression (\ref{GdefCopy}) with $F_\rr(\by) \equiv F_0$ defines the function $G:\mathcal{B}(\mathcal{A}_+) \mapsto \mathbb{R}^+$ as
	\begin{equation}
	\label{eqP41_fgraph}
	 G(\by) \equiv W_0.
	\end{equation}
We define the corresponding graph as
	\begin{equation}
	\label{eqP41_graph}
	 \mathcal{G}(\mathcal{A}_+) 
	 = \{(\by,w): \by \in \mathcal{B}(\mathcal{A}_+),\, w = G(\by)\},
	\end{equation}
which is the invariant manifold for system (\ref{eqU3Uadd1}). The attractor $\mathcal{A}'_+ \subset  \mathcal{G}(\mathcal{A}_+)$ is given by (\ref{eqAprime}).

Linearization of system \eqref{eqU3Uadd1} at any point of $\mathcal{G}(\mathcal{A}_+)$ takes the form
\begin{equation}
\label{eqU3U_J2add2x}
\frac{d}{\rmd \tau} 
\left(\begin{array}{c}
\delta{\by}\\[3pt] \delta{w}
\end{array}\right)
= \left(\begin{array}{cc}
W_0\nabla\mathbf{F}_s
&\ \ \mathbf{F}_s(\by)
\\[5pt]
\mathbf{0}& -1
\end{array}\right)
\left(\begin{array}{c}
\delta\by\\[3pt] \delta{w}
\end{array}\right),
\end{equation}
where $(\delta\by,\delta w) \in \mathbb{R}^{d-1} \times \mathbb{R}$ is an infinitesimal perturbation in the tangent space. It is straightforward to verify that system (\ref{eqU3U_J2add2x}) has a solution 
\begin{equation}
\label{eqU3U_J2add2c}
\left(\begin{array}{c}
\delta\by\\[3pt] \delta{w}
\end{array}\right)
= e^{-\tau}\left(\begin{array}{c}
\mathbf{F}_s(\by) \\[3pt] -1
\end{array}\right),
\end{equation}
which provides the eigenvalue  $-1$ with the corresponding eigenvector. The eigenvector defines the linear space $E^{ss}$ transversal to the graph $\mathcal{G}(\mathcal{A}_+)$, and it will play the role of strong stable (contracting) direction. 
Remaining eigenvalues are determined by the Jacobian matrix $W_0\nabla\mathbf{F}_s$ with the corresponding linear invariant space $E^c = \mathbb{R}^{d-1} \times \{0\}$ tangent to the graph $\mathcal{G}(\mathcal{A}_+)$.
Assumptions (\ref{conditionF0}) imply that eigenvalues of $W_0\nabla\mathbf{F}_s$ with  $W_0 = \left[(1-\alpha)F_{0}\right]^{-1}$ have absolute values smaller than unity. 

We showed that, at each point of the graph (\ref{eqP41_graph}), there exists a splitting $E^{ss}\oplus E^c$ of the tangent space, which is invariant for linearized system (\ref{eqU3U_J2add2x}) 
and such that $E^{ss}$ dominates (contracts stronger than) the so-called central directions in $E^c$. 
It follows from the stable manifold theorem that each point of $\mathcal{G}(\mathcal{A}_+)$ has a one-dimensional strong stable invariant manifold, which is tangent to $E^{ss}$; for background on the invariant manifold theory see \cite[Chapter 6]{S87}  for
discrete systems and \cite[Section 4.5]{Wigg94} for flows. 
Such structure can be described locally by a homeomorphism $\rho: \mathcal{U}\times (-\delta, \delta) \mapsto \mathcal{U}'$, where $\mathcal{U}$ and $\mathcal{U}'$ are, respectively, some trapping neighborhoods of the attractors $\mathcal{A}_+$ and $\mathcal{A}'_+$, and $\delta > 0$ is some (small) number. Here, the fibers $\rho(\by,\xi)$ for fixed $\by$ are local $C^1$-parametrizations of the strong stable manifolds starting on the graph $\rho(\by,0) \in \mathcal{G}(\mathcal{A}_+)$. 

Let $Y^\tau$ by the flow of system (\ref{eqU3Uadd1}). We denote by $Y^\tau_{\rho}=\rho^{-1}\circ Y^\tau  \circ \rho$ the flow, which is defined in $\mathcal{U}\times (-\delta, \delta)$ and conjugated to $Y^\tau$. By construction, this new flow $Y^\tau_{\rho}$ has the attractor $\mathcal{A}_\rho = \{(\by,0):\ \by \in \mathcal{A}_+\}$ with the physical measure     		
	\begin{equation}
    	\label{eq_proofR0z}
	\rmd \mu_\rho(\by,\xi) = \rmd \mu_{\mathsf{phys}}(\by)\, \delta(\xi)\rmd \xi, 
    	\end{equation}
where $\delta(\xi)$ is the Dirac delta-function and $\mu_{\mathsf{phys}}$ is the physical measure of the attractor $\mathcal{A}_+$. Straight segments $(\by,\xi)$ with fixed $\by$ and $\xi \in (-\delta,\delta)$ correspond to strong stable manifolds for the new flow $Y^\tau_{\rho}$. Moreover, since strong stable manifolds have constant eigenvalue $-1$, $Y^\tau_{\rho}$ has uniform contraction along strong stable manifolds to the plane $\xi = 0$ in a sufficiently small neighborhood $\mathcal{U}\times (-\delta, \delta)$.

Now, the property of convergence to equilibrium for the flow $Y^\tau$ follows from the same property for $Y^\tau_{\rho}$, where the latter is established as follows. The condition of convergence to equilibrium (\ref{SRBEqDef1CE_s}) for the new system becomes
    	\begin{equation}
    	\lim_{\tau\rightarrow +\infty}\int \varphi \circ Y^\tau_\rho
	\, \rmd \mu(\by,\xi)
    	= \int \varphi \,\rmd \mu_\rho(\by,\xi)
    	= \int \varphi(\by,0) \,\rmd \mu_{\mathsf{phys}}(\by),
    	\label{SRBEqDef1CENz}
    	\end{equation}
where we used (\ref{eq_proofR0z}) and integrated the Dirac delta-function.
It is enough to verify this condition for absolutely continuous probability measures $\mu(\by,\xi)$ supported in the trapping neighborhood $\mathcal{U}\times (-\delta, \delta)$.
Using properties of strong stable manifolds for the flow $Y^\tau_\rho$, the integral in the left-hand side of (\ref{SRBEqDef1CENz}) can be written as
    	\begin{equation}
    	\int \varphi \circ Y^\tau_\rho
	\, \rmd \mu(\by,\xi)
    	=
    	\int {\varphi \big( Y^\tau_\rho(\by,0) \big) 
	\, \rmd \mu(\by,\xi)}
    	+\int \varphi_1 \circ  Y^\tau_\rho
	\, \rmd \mu(\by,\xi),
    	\label{eq_proofR1z}
    	\end{equation}
where we introduced the function $\varphi_1(\by,\xi) = \varphi(\by,\xi)-\varphi(\by,0)$.
Since the flow $Y^\tau_\rho$ has the property of uniform contraction to the plane $\xi = 0$, where $\varphi_1 = 0$, the last integral in (\ref{eq_proofR1z}) vanishes in the limit $\tau \to +\infty$. For the first integral in the right-hand side of (\ref{eq_proofR1z}), we write
    	\begin{equation}
    	\int {\varphi \big( Y^\tau_\rho(\by,0) \big) 
	\, \rmd \mu(\by,\xi)}
    	= \int {\varphi \big( Y^\tau_\rho(\by,0) \big) 
	\, \rmd \mu_{\mathsf{int}}(\by)},
    	\label{eq_proofR2z}
    	\end{equation}
where $\mu_{\mathsf{int}}(\by)$ is obtained from the measure $\mu(\by,\xi)$ by integration with respect to $\xi$. The last integral in (\ref{eq_proofR2z}) corresponds to the flow $Y^\tau_\rho$ restricted to the invariant plane $\xi = 0$, and it is conjugate to the original flow $Y^\tau$ restricted to the graph (\ref{eqP41_graph}) with the constant function (\ref{eqP41_fgraph}). The latter becomes the flow $X^s$ of system \eqref{yEqn} after the scaling of time with the constant factor $W_0$. Therefore, we reduced (\ref{SRBEqDef1CENz}) to the analogous condition of convergence to equilibrium for system \eqref{yEqn}, which holds by our assumptions. This proves the first part of the proposition.

For the proof of the $C^k$-robust convergence to equilibrium, we will need the following 

\begin{lemma}
\label{lemma1}
Consider an attractor $\mathcal{A}_+$ of system \eqref{yEqn} with $C^k$-functions $\bFF_\s: \mathcal{V}\mapsto \mathbb{R}^{d-1} $ and $F_\rr: \mathcal{V} \mapsto \mathbb{R}$ satisfying the conditions
	\be\label{conditionFm} 
		\| \nabla \bFF_\s\| < M,\quad
		F_\rr(\by) > m > 0
	\ee 
for any $\by \in \mathcal{B}(\mathcal{A}_+)$ and positive constants $m$ and $M$ such that $M < (1-\alpha)m/k$. Then,
\begin{enumerate}
\item[(i)] Expression \eqref{GdefCopy} defines the $C^k$-differentiable function $G:\mathcal{B}(\mathcal{A}_+) \mapsto \mathbb{R}^+$.
\item[(ii)] Let $\by(\tau)$ be the solution of equation
\begin{equation}\label{eqU3firstbar}
\frac{\rmd \by}{\rmd \tau}= G(\by)\bFF_\s(\by)
\end{equation}
for arbitrary initial condition $\by_0 \in \mathcal{B}(\mathcal{A}_+)$. Then $w(\tau) = G(\by(\tau))$ satisfies \eqref{eq_RenX2}.
\item[(iii)] 
Sufficiently small $C^k$-perturbations of $\bFF_\s$ and $F_\rr$ yield small $C^k$-perturbations of $G$.
\end{enumerate}
\end{lemma}

\begin{proof}
The above lemma is related to the general statements of the invariant manifold theory as stated in \cite{HPS} for discrete systems and in \cite{Wigg94} for flows. Below, for completeness, we present a direct proof for arbitrary functions $F_\rr$ satisfying (\ref{conditionFm}).
%When $F_\rr$ is a constant function, as actually required in the proof of Proposition~\ref{Prop2},  the lemma follows, having in mind (\ref{eqU3U_J2add2x}) and (\ref{eqU3U_J2add2c}), from general statements of the invariant manifold theory as stated in \cite{HPS} for discrete systems and in \cite{Wigg94} for flows.
Let us first consider the case $k = 1$.

($i$) Changing signs of the integration variables $s_1$ and $s_2$ in expression (\ref{GdefCopy}) yields
	\begin{equation}
	\label{Gdef2}
	G(\by) = \lim_{s \to -\infty} G_s(\by),\qquad
	G_s(\by) = \int_s^0 \exp\left[(\alpha-1)\int_{s_1}^0 
	F_\rr\circ X^{s_2}(\by)
	\rmd s_2\right]\rmd s_1,
	\end{equation}
where we introduced the function $G_s: \mathcal{V} \mapsto \mathbb{R}^+$. By construction, $G_s$ is a $C^1$-function for any $s$. The second condition in (\ref{conditionFm}) implies the uniform convergence of the limit (\ref{Gdef2}) for $\by \in \mathcal{B}(\mathcal{A}_+)$. Hence, the limiting function $G$ is continuous in $\mathcal{B}(\mathcal{A}_+)$.

Computing the Jacobian matrix $\nabla G_s$ in (\ref{Gdef2}) at a given point $\by$ yields
	\begin{equation} \label{Ggrad}
	\nabla G_s =  (\alpha-1) \int_{s}^0
	\left(\int_{s_1}^0 \nabla  \left( F_\rr\circ X^{s_2}\right) \rmd s_2\right) 
	\exp\left[(\alpha-1)\int_{s_1}^0 F_\rr\circ X^{s_2}(\by)\rmd s_2\right]\rmd s_1,
	\end{equation}
where
	\begin{equation} \label{Ggrad_2}
	\nabla  \left( F_\rr\circ X^{s_2}\right) 
	=  \left(\nabla F_\rr\right)_{X^{s_2}(\by)} \nabla X^{s_2},	
	\end{equation}
and $\left(\nabla F_\rr\right)_{X^{s_2}(\by)}$ denotes the gradient vector $\nabla F_\rr$ computed at $X^{s_2}(\by)$. 

Since $X^s$ is the flow of system \eqref{yEqn}, by the classical theory of ordinary differential equations, the Jacobian matrix $\nabla X^s$ satisfies the linear Cauchy problem
	\begin{equation} \label{Ggrad_3}
	\frac{\rmd }{\rmd s} \,\nabla X^s 
	= 
	\left(\nabla \bFF_\s\right)_{X^{s}(\by)} \nabla X^s, \qquad
	\nabla X^0 = \mathbf{I},
	\end{equation}
where $\mathbf{I}$ is the identity matrix and $\left(\nabla \bFF_\s\right)_{X^{s}(\by)}$ is the Jacobian matrix $\nabla\bFF_\s$ at $X^{s}(\by)$. Using (\ref{Ggrad_3}) for negative $s$, the first bound of (\ref{conditionFm}) and Gr\"onwall's inequality, we estimate
	\begin{equation} \label{Ggrad_4}
	\| \nabla X^{s} \| \leq e^{-M s},\qquad s \le 0.
	\end{equation}
Let $M_\rr = \max_{\by \in \mathcal{V}} \|\nabla F_\rr\| \ge 0$.
Using expressions (\ref{Ggrad}), (\ref{Ggrad_2}) and (\ref{Ggrad_4}) with the bounds (\ref{conditionFm}) and recalling that $\alpha < 1$, $s_1 \le 0$ and $s_2 \le 0$, we obtain 
	\begin{equation}
	\|\nabla G_s-\nabla G_{s'} \| \leq  
	(1-\alpha) \int_{s}^{s'}
	\left(\int_{s_1}^0  M_\rr e^{-M s_2} \rmd s_2\right) 
	e^{(1-\alpha)m s_1}\rmd s_1
	\label{Ggrad_5}
	\end{equation}
for any $s < s' < 0$.  Integrating the right-hand side of (\ref{Ggrad_5}) and taking into account that 
	\begin{equation} 
	\label{Ggrad_6_extra}
	(1-\alpha)m > (1-\alpha)m-M > 0 
	\end{equation}
by the conditions of the lemma, one can show the Cauchy convergence of the gradients $\nabla G_s$ in the limit $s \to -\infty$. Since the bound in (\ref{Ggrad_5}) does not depend on $\by$, the convergence is uniform in $\mathcal{B}(\mathcal{A}_+)$. This proves the continuity of the limiting gradient $\nabla G$ in (\ref{Gdef2}).

($ii$) Consider the pair of functions $\by(\tau)$ and $w(\tau) = G(\by(\tau))$, where $\by(\tau)$ satisfies equation (\ref{eqU3firstbar}). Obviously, these functions satisfy the first equation of (\ref{eq_RenX2}). The second equation in (\ref{eq_RenX2}) can be transformed to the form (\ref{eqU3secondCopy}) with the time change (\ref{eq_proof1d}). Then, this equation is verified as in Lemma~\ref{lemma2}, taking into account that the integrals converge uniformly for all $\by \in  \mathcal{B}(\mathcal{A}_+)$.
\vspace{2mm} 

($iii$) Using the uniform bound (\ref{Ggrad_5}) one proves that the convergence of integrals in (\ref{Ggrad}) as $s \to -\infty$ is uniform not only with respect to $\by$, but also with respect to sufficiently small $C^1$-perturbations of the functions $\bFF_\s(\by)$ and $F_\rr(\by)$.  This implies that such perturbations lead to  $C^1$-perturbations of $G(\by)$. 

This proof extends to the $C^k$ case for $k > 1$ by computing high-order derivatives of $G$ in the way similar to (\ref{Gdef2}) and (\ref{Ggrad}). Generalizing expression (\ref{Ggrad_4}), one can show that $k$th-order derivatives of $X^s(\by)$ are bounded by $c\exp(-kMs)$ for $s \le 0$ and some coefficient $c > 0$.  We leave details of this rather straightforward derivation to the interested reader.
\end{proof}

Consider now a perturbed system \eqref{eq_RenX2} with $\tilde{\bFF}_s$ close to $\bFF_\s$ and $\tilde{F}_r$ close to $F_\rr$
in the $C^k$-metric; here and below the tildes denote properties of the perturbed system. Conditions of Definition~\ref{defrobust} ensure that the perturbed system \eqref{yEqn} has an attractor $\tilde{\mathcal{A}}_+$ with the physical measure and the convergence to equilibrium property. In turn, the perturbed system (\ref{eq_RenX2}) has the attractor $\tilde{\mathcal{A}}'_+$ given by the graph $w = \tilde{G}(\by)$ of $\by \in \tilde{\mathcal{A}}_+$; see Proposition~\ref{Th1_attr}.
Conditions (\ref{conditionF0b}) remain valid if the perturbation is sufficiently small. Hence, one can choose $m$ and $M$ satisfying conditions of Lemma~\ref{lemma1}, establishing that the function $\tilde{G}(\by)$ is $C^k$-close to the constant from (\ref{eqP41_fgraph}), and also the graph $w = \tilde{G}(\by)$ with $\by \in \mathcal{B}(\tilde{\mathcal{A}}_+)$ is invariant under the flow of  perturbed system (\ref{eq_RenX2}). 

Restriction of (\ref{eq_RenX2}) to the invariant hyper-surface $w = \tilde{G}(\by)$ yields 
    \begin{equation}
    \label{eqPr4A}
    \frac{\rmd \by}{\rmd \tau}= \tilde{G}(\by)\tilde{\bFF}_s(\by).
    \end{equation}
This system is $C^k$-close to $\rmd \by/\rmd \tau= W_0 \bFF_\s(\by)$, where the latter is equivalent to $\rmd \by/\rmd s = \bFF_\s(\by)$ up to the constant time scaling. Since the attractor $\mathcal{A}_+$ of unperturbed system \eqref{yEqn} is assumed to have a physical measure with the $C^k$-robust convergence to equilibrium, the attractor $\tilde{\mathcal{A}}_+$ of perturbed system (\ref{eqPr4A}) has a physical measure with the property of convergence to equilibrium, provided that the perturbation is sufficiently small. For concluding the proof, one should notice that all arguments in the first part of the proof (based on the invariant manifold theory) remain valid for small $C^k$ perturbations of the system and of the graph (\ref{eqP41_graph}).

\vspace{3mm}\noindent\textbf{Acknowledgements:}
\ We would like to thank Isaia Nisoli and Enrique Pujals for helpful discussions.  The research of TDD was partially supported by the NSF
DMS-2106233 grant. AAM was supported by CNPq (grants 303047/2018-6, 406431/2018-3).

\bibliographystyle{siam} 
\bibliography{refs}

\end{document}